\def\titlerunning#1{\gdef\titrun{#1}}
\def\author#1{\gdef\autrun{\def\and{\unskip, }#1}\gdef\@author{#1}}
\def\address#1{{\def\and{\\\hspace*{18pt}}\renewcommand{\thefootnote}{}%
\footnote {#1}}%
\markboth{\autrun}{\titrun}}
\def\email#1{\hspace*{4pt}{\em e-mail}: #1}
\def\MSC#1{{\renewcommand{\thefootnote}{}%
\footnote{\emph{Mathematics Subject Classification (2020):} #1}}}
\def\keywords#1{\par\medskip
\noindent\textbf{Keywords:} #1}
\newtheorem{theorem}{Theorem}[section]
\newtheorem{prop}[theorem]{Proposition}
\newtheorem{cor}[theorem]{Corollary}
\newtheorem{lemma}[theorem]{Lemma}
\newtheorem{defin}[theorem]{Definition}
\theoremstyle{definition}
\newtheorem{remark}[theorem]{Remark}
\numberwithin{equation}{section}
\DeclareMathOperator{\N}{N}
\def\cC{\mathcal C}
\def\cS{\mathcal S}
\def\cV{\mathcal V}
\def\PG{{\rm PG}}
\def\F{{\mathbb F}}
\newcommand{\ep}{\epsilon}
\def\rk{{\rm rk}}
\def\w{\omega}
\begin{document}


\baselineskip=16pt

\titlerunning{}

\title{Small complete caps in $\PG(4n+1, q)$}

\author{Antonio Cossidente
\and 
Bence Csajb\'ok
\and
Giuseppe Marino 
\and 
Francesco Pavese}

\date{}

\maketitle

\address{A. Cossidente: Dipartimento di Matematica, Informatica ed Economia, Universit{\`a} degli Studi della Basilicata, Contrada Macchia Romana, 85100, Potenza, Italy; \email{antonio.cossidente@unibas.it}
\and 
B. Csajb\'ok: ELKH--ELTE Geometric and Algebraic Combinatorics Research Group, ELTE E\"otv\"os Lor\'and University, Budapest, Hungary, Department of Geometry,
	1117 Budapest, P\'azm\'any P.\ stny.\ 1/C, Hungary;	\email{bence.csajbok@ttk.elte.hu}
\and
G. Marino: Dipartimento di Matematica e Applicazioni ``Renato Caccioppoli'', Universit{\`a} degli Studi di Napoli ``Federico II'', Complesso Universitario di Monte Sant'Angelo, Cupa Nuova Cintia 21, 80126, Napoli, Italy; \email{giuseppe.marino@unina.it}
\and
F. Pavese: Dipartimento di Meccanica, Matematica e Management, Politecnico di Bari, Via Orabona 4, 70125 Bari, Italy; \email{francesco.pavese@poliba.it}
}


\MSC{Primary: 51E22 Secondary: 11T06; 51E20; 94B05.}

\begin{abstract}
In this paper we prove the existence of a complete cap of $\PG(4n+1, q)$ of size 
$2(q^{2n+1}-1)/(q-1)$, for each prime power $q>2$.
It is obtained by projecting two disjoint Veronese varieties of $\PG(2n^2+3n, q)$ from a suitable $(2n^2-n-2)$-dimensional projective space. This shows that the trivial lower bound for the size of the smallest complete cap of $\PG(4n+1, q)$ is essentially sharp. 

\keywords{complete cap, Veronese variety, linearized polynomial, permutation polynomial.}
\end{abstract}

\section{Introduction}

Let $\PG(r, q)$ denote the $r$-dimensional projective space over $\F_q$, the finite field with $q$ elements. A {\em $k$-cap} in $\PG(r, q)$ is a set of $k$ points no three of which are collinear. A $k$-cap in $\PG(r, q)$ is said to be {\em complete} if it is not contained in a $(k+1)$-cap in $\PG(r, q)$. The study of caps is not only of geometrical interest, their concept arises from coding theory. 

A {\em $q$--ary linear code} $\cC$ of dimension $k$ and length $N$ is a $k$-dimensional vector subspace of $\F_q^N$, whose elements are called {\em codewords}. A {\em generator matrix} of $\cC$ is a matrix whose rows form a basis of $\cC$ as an $\F_q$-vector space. The {\em minimum distance of $\cC$} is $d = \min\{d(u, 0) \mid u \in \cC, u \ne 0\}$, where $d(u, v)$, $u, v \in \F_q^N$, is the {\em Hamming distance} on $\F_q^N$. A vector $u$ is {\em $\rho$--covered by $v$} if $d(u, v) \le \rho$. The {\em covering radius} of a code $\cC$ is the smallest integer $\rho$ such that every vector of $\F_q^n$ is $\rho$--covered by at least one codeword of $\cC$. 
A linear code with minimum distance $d$ and covering radius $\rho$ is said to be an $[N, k, d]_q$ $\rho$--code. 
For a code $\cC$, its {\em dual code} is $\cC^\perp = \{v \in \F_q^N \mid v \cdot c = 0, \forall c \in \cC\}$ (here $\cdot$ is the Euclidean inner product). The dimension of the dual code $\cC^\perp$, or the codimension of $\cC$, is $N - k$. Any matrix which is a generator matrix of $\cC^\perp$ is called a {\em parity check matrix} of $\cC$. If $\cC$ is linear 
with parity check matrix $M$, its covering radius is the smallest $\rho$ such that every $w \in \F_q^{N - k}$ can be written as a linear combination of at most $\rho$ columns of $M$. 
By identifying the representatives of the points of a complete $k$-cap of $\PG(r, q)$ with columns of a parity check matrix of a $q$--ary linear code it follows that (apart from three sporadic exceptions) complete $k$-caps in $\PG(r, q)$ with $k > r + 1$ and non-extendable linear $[k, k - r - 1, 4]_q$ $2$--codes are equivalent objects, see \cite{GDT}.

One of the main issues in this area is to determine the spectrum of the sizes of complete caps in a given projective space and in particular 
to determine the size of the smallest and the largest complete caps. 
For the size $t_2(r, q)$ of the smallest complete cap in $\PG(r, q)$, the trivial lower bound is $t_2(r, q) > \sqrt{2} q^{\frac{r-1}{2}}$. Constructions of complete caps whose size is close to this lower bound are only known for $q$ even. In $1959$ B. Segre \cite{Segre} proved the existence of a complete cap of $\PG(3, q)$, $q$ even, of size $3q+2$. Later on, F. Pambianco and L. Storme \cite{PS}, 
based on Segre's construction, 
proved the existence of complete caps of $\PG(r, q)$, $q$ even, of size $q^{\frac{r}{2}} + 3 \left( q^{\frac{r-2}{2}} + \dots + q \right) + 2$, if $r$ is even, and $3 \left( q^{\frac{r-1}{2}} + \dots + q \right) + 2$, if $r$ is odd. These ideas have been further developed by M. Giulietti \cite{G} and A. Davydov, M. Giulietti, S. Marcugini and F. Pambianco \cite{DGMP} who exhibited smaller caps in $\PG(r, q)$, $q$ even. A probabilistic approach has been employed in \cite{DFMP} to provide an upper bound on $t_2(r, q)$. For further or more recent results on this topic see also \cite{BFMP, BFMP1, BGMP, DO}. For an account on the various constructive methods known so far the reader is referred to \cite{G1}. Hence despite the many efforts made, apart from the cases $q$ even and $r$ odd, all known  explicit constructions of infinite families of complete caps in $\PG(r, q)$ have size far from the trivial bound.

In this paper we show that the trivial lower bound on $t_2(4n+1, q)$ is essentially sharp. More precisely, we prove the existence of a complete cap of $\PG(4n+1, q)$, $q>2$, of size $2(q^{2n} + \dots + 1)$ that is obtained by projecting two disjoint Veronese varieties of $\PG(2n^2+3n, q)$ from a suitable $(2n^2-n-2)$-dimensional projective space. To prove the completeness of these caps we also rely on a result of F. \"Ozbudak about linearized permutations \cite{Ferruh}. We use his result to show that certain linearized polynomials defined over $\F_{q^k}$ induce a permutation of $\F_{q^k}$ if and only if they induce a permutation of $\F_{q^{2k}}$. 

\section{Preliminaries}
We start with some technical, preliminary results that we will use in the next section.

\noindent Let $\N_{q^k/q}: x \in \F_{q^{k}} \mapsto x^{q^{k-1} + \dots + q + 1} \in \F_q$ be the {\em norm function} of $\F_{q^{k}}$ over $\F_q$.

\begin{lemma}
		\label{l1}
		Take $a\in \F_{q^k}$ and $b\in \F_{q^{2k}}$ such that $b^{q^k}=-b$ and $a^2-b^2=1$. 
		Then $\N_{q^{2k}/q}(a+b)=\N_{q^{2k}/q}(a-b)=1$.	
\end{lemma}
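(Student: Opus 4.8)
The plan is to exploit the transitivity of the norm through the tower $\F_q \subseteq \F_{q^k} \subseteq \F_{q^{2k}}$, which reduces everything to a one-line computation. Recall that $\N_{q^{2k}/q} = \N_{q^k/q} \circ \N_{q^{2k}/q^k}$, and that the relative norm is simply $\N_{q^{2k}/q^k}(x) = x \cdot x^{q^k} = x^{1+q^k}$ since $\mathrm{Gal}(\F_{q^{2k}}/\F_{q^k})$ is generated by $x \mapsto x^{q^k}$.

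First I would compute $(a+b)^{q^k}$ and $(a-b)^{q^k}$. Since $a \in \F_{q^k}$ we have $a^{q^k} = a$, and by hypothesis $b^{q^k} = -b$, so $(a+b)^{q^k} = a - b$ and $(a-b)^{q^k} = a + b$. Consequently
\[
\N_{q^{2k}/q^k}(a+b) = (a+b)(a+b)^{q^k} = (a+b)(a-b) = a^2 - b^2 = 1,
\]
and in exactly the same way $\N_{q^{2k}/q^k}(a-b) = (a-b)(a+b) = a^2 - b^2 = 1$. (The hypothesis $a^2 - b^2 = 1 \ne 0$ also guarantees $a \pm b \ne 0$, so there is nothing degenerate happening, though this is not even needed since the norm is defined on all of $\F_{q^{2k}}$.)

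Finally I would apply the outer norm: since $\N_{q^k/q}(1) = 1$, transitivity gives
\[
\N_{q^{2k}/q}(a \pm b) = \N_{q^k/q}\bigl(\N_{q^{2k}/q^k}(a \pm b)\bigr) = \N_{q^k/q}(1) = 1,
\]
which is the claim. There is no real obstacle here: the only point requiring a moment's care is the identity $(a+b)^{q^k} = a-b$, i.e.\ correctly using that $a$ lies in the fixed field of $x \mapsto x^{q^k}$ while $b$ is anti-fixed; once that is in place the rest is the multiplicativity and transitivity of the norm.
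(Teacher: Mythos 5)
Your proof is correct and follows essentially the same idea as the paper: both hinge on the observation that the $q^k$-conjugate of $a+b$ is $a-b$, so these factors pair to $a^2-b^2=1$. You package it via norm transitivity $\N_{q^{2k}/q}=\N_{q^k/q}\circ\N_{q^{2k}/q^k}$, whereas the paper expands the full product of conjugates over $\F_q$ and pairs them to get $\N_{q^k/q}(a^2-b^2)$; the two computations are the same in substance, and your formulation is, if anything, a little cleaner.
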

\begin{proof}
Both of $\N_{q^{2k}/q}(a+b)$ and $\N_{q^{2k}/q}(a-b)$ are equal to
	\[(a+b)(a-b)(a^q+b^q)(a^q-a^q)\ldots(a^{q^k}+b^{q^k})(a^{q^k}-b^{q^k})=\N_{q^k/q}(a^2-b^2)=1.\]
\end{proof}
	
	\begin{cor}
		\label{cor1}
		If $\w\in \F_{q^k}$ and $\w^2-1$ is a non-square in $\F_{q^k}$, then \[\N_{q^{2k}/q}\left(\w+T\right) = \N_{q^{2k}/q}\left(\w-T\right) = 1,\] where $T\in \F_{q^{2k}}$ is any of the square-roots of $\w^2-1$.
	\end{cor}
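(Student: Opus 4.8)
The plan is to obtain this immediately from Lemma~\ref{l1}, applied with $a=\w$ and $b=T$. The only thing to do is to verify the three hypotheses of that lemma, namely that $T\in\F_{q^{2k}}$, that $T^{q^k}=-T$, and that $\w^2-T^2=1$; once these are checked the conclusion follows verbatim.

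First I would observe that, since $\w^2-1$ is a non-square in $\F_{q^k}$, none of its square-roots can lie in $\F_{q^k}$; on the other hand the quadratic extension $\F_{q^{2k}}$ does contain a square-root of every element of $\F_{q^k}$, so $T\in\F_{q^{2k}}\setminus\F_{q^k}$, which gives the first hypothesis. Next, $T^2=\w^2-1\in\F_{q^k}$ is fixed by the map $x\mapsto x^{q^k}$, hence $(T^{q^k})^2=(T^2)^{q^k}=T^2$ and therefore $T^{q^k}=\pm T$; since $T\notin\F_{q^k}$ the case $T^{q^k}=T$ is impossible, so $T^{q^k}=-T$, which is the second hypothesis. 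The third is trivial: $\w^2-T^2=\w^2-(\w^2-1)=1$. Applying Lemma~\ref{l1} then yields $\N_{q^{2k}/q}(\w+T)=\N_{q^{2k}/q}(\w-T)=1$. Finally, because the two square-roots of $\w^2-1$ are $T$ and $-T$ and the unordered pair $\{\w+T,\w-T\}$ is unchanged under $T\mapsto -T$, the statement does not depend on which square-root is chosen, as asserted.

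I do not anticipate any genuine obstacle here: the argument is purely formal given Lemma~\ref{l1}. The only mildly delicate point is the remark that a non-square of $\F_{q^k}$ has no square-root in $\F_{q^k}$ and that this is exactly what forces $T^{q^k}=-T$, so that the hypothesis $b^{q^k}=-b$ of the lemma is met automatically rather than assumed.
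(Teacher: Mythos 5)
Your proposal is correct and follows essentially the same route as the paper: apply Lemma~\ref{l1} with $a=\w$ and $b=T$, the only substantive point being the verification that $T^{q^k}=-T$. The paper checks this directly via Euler's criterion, writing $T^{q^k}=T(\w^2-1)^{(q^k-1)/2}=-T$, while you deduce it from $T\notin\F_{q^k}$ together with $(T^{q^k})^2=T^2$; these are equivalent one-line arguments, so there is nothing to add.
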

\begin{proof}
	In Lemma \ref{l1} put $a=\w$ and $b=T$. The condition \[T^{q^k}=T(\w^2-1)^{(q^k-1)/2}=-T\] holds true and hence the assertion follows from Lemma \ref{l1}.
\end{proof}

	\begin{lemma}
		\label{sum}
		For $\epsilon \in \{1,-1\}$ consider the following $k\times k$ matrix:
		\[
		D_{\epsilon}:=\begin{pmatrix}
		a_0 & a_1 & a_2 & 0 & \ldots & 0 & 0 \\
		0 & a_0^q & a_1^q & a_2^q & \ldots & 0 & 0 \\
		\vdots & 	\vdots & 	\vdots & 	\vdots & 	\vdots & 	\vdots & 	\vdots \\
		\epsilon a_2^{q^{k-2}} & 0 & 0 & 0 & \ldots & a_0^{q^{k-2}} & a_1^{q^{k-2}} \\
		\epsilon a_1^{q^{k-1}} & \epsilon a_2^{q^{k-1}} & 0 & 0 & \ldots & 0 & a_0^{q^{k-1}} 
		\end{pmatrix}.\]
		Then $\det D_1 + \det D_{-1}=2\left(\N_{q^k/q}(a_0)+\N_{q^k/q}(a_2)\right)$.
	\end{lemma}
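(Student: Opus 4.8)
The plan is to expand both determinants by cofactors along the first column and show that the "cross terms'' involving a mix of $a_0$-powers and $a_2$-powers cancel between $D_1$ and $D_{-1}$, while the pure-$a_0$ and pure-$a_2$ contributions survive and double. Observe that $D_\epsilon$ is a circulant-like band matrix: row $i$ (indexed $0,\dots,k-1$) has the entries $a_0^{q^i},a_1^{q^i},a_2^{q^i}$ in cyclically consecutive positions $i,i+1,i+2 \pmod k$, except that the two entries that "wrap around'' past the last column and reappear in the leftmost columns are multiplied by $\epsilon$. Concretely, the $(k-2,0)$ entry carries $\epsilon a_2^{q^{k-2}}$ and the $(k-1,0)$ and $(k-1,1)$ entries carry $\epsilon a_1^{q^{k-1}}$ and $\epsilon a_2^{q^{k-1}}$. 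So $\det D_\epsilon = \sum_{\sigma} \operatorname{sgn}(\sigma)\prod_i (D_\epsilon)_{i,\sigma(i)}$, and each nonzero term corresponds to a choice, for every row $i$, of one of its three band entries, such that the chosen column indices form a permutation of $\{0,\dots,k-1\}$.

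The key combinatorial step is to classify these surviving permutations. Because the band has width $3$ and wraps cyclically, the column chosen in row $i$ is $i$, $i+1$, or $i+2$ (mod $k$); writing $\sigma(i) = i + s_i \pmod k$ with $s_i\in\{0,1,2\}$, the condition that $\sigma$ be a permutation forces $\sum_i s_i \equiv 0 \pmod k$, and in fact a short analysis of such "bounded displacement'' permutations of $\mathbb Z/k$ shows the only possibilities are: (i) $s_i\equiv 0$ for all $i$ (the identity, product $\N_{q^k/q}(a_0)$); (ii) $s_i\equiv 2$ for all $i$ (product $\pm\N_{q^k/q}(a_2)$, with a sign coming from the permutation and the number of $\epsilon$-factors); and (iii) a family of permutations that use at least one entry $a_1$, hence pick up at least one factor $a_1^{q^j}$. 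For the type-(iii) terms one checks that each such permutation involves an \emph{odd} total number of wrap-around entries — equivalently the number of $\epsilon$-factors picked up is odd — so that the term contributes with opposite signs to $\det D_1$ and $\det D_{-1}$ and cancels in the sum; meanwhile for types (i) and (ii) the number of $\epsilon$-factors is even (zero for the identity, and for the all-$s_i=2$ term the wrap-arounds are precisely the two entries $\epsilon a_2^{q^{k-2}}$ and... — one must count the $\epsilon$'s and the sign of the $k$-cycle carefully and confirm they combine to $+1$), so these terms add rather than cancel, giving $\det D_1+\det D_{-1} = 2\bigl(\N_{q^k/q}(a_0)+\N_{q^k/q}(a_2)\bigr)$.

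I expect the main obstacle to be the bookkeeping in step two: precisely enumerating the bounded-displacement permutations of $\mathbb Z/k$ with $s_i\in\{0,1,2\}$, and for each one correctly tracking the three quantities $\operatorname{sgn}(\sigma)$, the parity of the count of $\epsilon$-factors, and the resulting monomial in $a_0,a_1,a_2$ and their $q$-power conjugates. The cleanest way to organize this is probably a small induction on $k$ (or a transfer-matrix / recursion argument on the band structure), reducing the permanent-like expansion to a product over a cyclic sequence of $2\times 2$ or $3\times 3$ local choices; one then observes that replacing $D_1$ by $D_{-1}$ flips the sign of exactly the wrap-around contributions, and that every monomial other than $\N_{q^k/q}(a_0)$ and $\N_{q^k/q}(a_2)$ occurs an even number of times with net sign zero after adding the two determinants. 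An alternative, possibly slicker route is to write $D_\epsilon = A_0 + A_1 + A_2$ where $A_j$ has the $a_t^{q^i}$-entries only in the "$+t$'' cyclic diagonal for one value of $t$ (with $\epsilon$ on the wrap part of $A_1,A_2$), expand $\det(A_0+A_1+A_2)$ multilinearly by diagonals, and use that a sum of cyclic diagonals gives a nonzero determinant only when the chosen diagonals' displacements sum to $0\bmod k$; the $\epsilon\to-\epsilon$ symmetry then kills every mixed contribution. Either way the arithmetic input is only the definition of the norm; no field-theoretic facts beyond $\N_{q^k/q}(x)=\prod_{i=0}^{k-1}x^{q^i}$ are needed.
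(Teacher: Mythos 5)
Your overall strategy (Leibniz expansion over the cyclic band, then cancellation of the terms that pick up an odd number of $\epsilon$-factors when adding $\det D_1$ and $\det D_{-1}$) can be made to work, but the pivotal combinatorial claim, as you state it, is false, and the decisive verifications are left as ``bookkeeping''. The trichotomy ``identity, all displacements $2$, or at least one $a_1$-entry'' fails for even $k$: for $k=4$ (rows and columns indexed $0,\dots,3$) the displacement vector $(s_0,s_1,s_2,s_3)=(0,2,0,2)$ gives the transposition of columns $1$ and $3$, whose term $-\epsilon\, a_0^{1+q^2}a_2^{q+q^3}$ uses only $a_0$- and $a_2$-entries and is neither of your types (i), (ii). (The trichotomy does happen to hold when $k$ is odd, since with no $a_1$-entry one gets $\sum_i s_i\in\{0,2k\}$, but you neither note nor prove this, and the lemma is stated for arbitrary $k$.) Likewise, the facts on which the cancellation really rests --- that the only permutations using an \emph{even} number of $\epsilon$-marked entries are the two ``pure'' diagonals, and that both carry sign $+1$ --- are asserted or deferred; also, the all-$s_i=2$ permutation is a $k$-cycle only for odd $k$ (for even $k$ it is two $(k/2)$-cycles, though the sign is still $+1$).

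The clean repair along your own lines is to replace the classification by an exact displacement count: write $\sigma(i)=i+s_i-kw_i$ with $s_i\in\{0,1,2\}$ and $w_i\in\{0,1\}$ recording whether the entry chosen in row $i$ is one of the three $\epsilon$-marked wrap entries. Summing $\sigma(i)=i+s_i-kw_i$ over $i$ gives the identity $\sum_i s_i=kW$ with $W=\sum_i w_i\in\{0,1,2\}$ (an equality, not just a congruence). Then $W=0$ forces $s_i\equiv 0$, giving $\N_{q^k/q}(a_0)$ with sign $+1$; $W=2$ forces $s_i\equiv 2$, giving $\epsilon^2\N_{q^k/q}(a_2)$ with sign $+1$; and every other permutation has $W=1$, so its contributions to $\det D_1$ and $\det D_{-1}$ are opposite and cancel, yielding the stated formula for all $k$. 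Note that this is a genuinely different route from the paper, which avoids any global permutation analysis: it expands $\det D_\epsilon$ along the first two columns, observes that all $\epsilon$'s sit in those columns so the complementary minors are independent of $\epsilon$, lets the terms linear in $\epsilon$ cancel in the sum, and evaluates the two surviving complementary minors, which are upper triangular, to get $\N_{q^k/q}(a_0)+\N_{q^k/q}(a_2)$. Your Leibniz approach is viable and even a bit more self-contained, but only once the displacement identity above replaces your stated classification and the two sign computations are actually carried out.
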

	\begin{proof}
		For $i\neq j$ let $D_{\epsilon,i,j}$ denote the $(k-2)\times (k-2)$ matrix obtained from $D_{\epsilon}$ after removing its first two columns and its $i$-th and $j$-th rows. Then 	
		\begin{align*}
		    \det D_{\epsilon} = & a_0^{q+1}\det D_{\epsilon,1,2}+(-1)^{k} a_0\epsilon a_2^{q^{k-1}} \det D_{\epsilon,1,k}+\ep^2 a_2^{q^{k-2}+q^{k-1}}\det D_{\ep, k-1,k}+ \\
			     & (-1)^{k+1}\ep a_2^{q^{k-2}}a_1\det D_{\ep, 1,k-1} + (-1)^{k} \ep a_2^{q^{k-2}}a_0^q \det D_{\ep,2,k-1} + \\
	        &	(-1)^{k}\ep a_1^{q^{k-1}+1}\det D_{\ep, 1, k}+(-1)^{k+1}\ep a_1^{q^{k-1}}a_0^q\det D_{\ep, 2, k}.
		\end{align*}
		Note that $D_{\ep,k-1,k}$ is upper-triangular, thus $\det D_{\ep,k-1,k}=a_2^{1+q+\ldots+q^{k-3}}$. Also $D_{\ep,1,2}$ is upper-triangular, thus
		$\det D_{\ep,1,2}=a_0^{q^2+q^3+\ldots+q^{k-1}}$. Then the result follows since $\ep^2=1$ and the minors $\det D_{\epsilon,i,j}$ do not depend on $\epsilon$.
	\end{proof}
	
	\begin{theorem}
		\label{fer}
		Consider $f(x)=a_0x+a_1x^q+a_2x^{q^2}\in \F_{q^k}[x]$. If $\N_{q^k/q}(a_0)+\N_{q^k/q}(a_2)=0$ then 
		$f(x)$ induces a permutation of $\F_{q^k}$ if and only if it induces a permutation of $\F_{q^{2k}}$.
	\end{theorem}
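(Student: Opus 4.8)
The plan is to connect the permutation behavior of $f$ on $\F_{q^k}$ and on $\F_{q^{2k}}$ through the determinant of the associated Dickson-type matrix, which Lemma~\ref{sum} has prepared us to control. Recall the standard fact (see \"Ozbudak~\cite{Ferruh}, or the classical theory of linearized polynomials): a linearized polynomial $f(x)=\sum_{i=0}^{k-1} a_i x^{q^i}\in\F_{q^k}[x]$ induces a permutation of $\F_{q^k}$ if and only if the associated $k\times k$ matrix
\[
A_f=\begin{pmatrix}
a_0 & a_1 & \cdots & a_{k-1}\\
a_{k-1}^q & a_0^q & \cdots & a_{k-2}^q\\
\vdots & & & \vdots\\
a_1^{q^{k-1}} & a_2^{q^{k-1}} & \cdots & a_0^{q^{k-1}}
\end{pmatrix}
\]
is nonsingular over $\F_{q^k}$. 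For our trinomial $f(x)=a_0x+a_1x^q+a_2x^{q^2}$, this matrix is exactly the matrix $D_1$ of Lemma~\ref{sum}. The key point is that when we pass to the field $\F_{q^{2k}}$, the polynomial $f$ is still $\F_q$-linear and $\F_{q^{2k}}$ has degree $2k$ over $\F_q$; its associated $2k\times 2k$ matrix over $\F_{q^{2k}}$ is a ``circulant-type'' block that breaks up nicely. Concretely, I would show that the $2k\times 2k$ matrix $B_f$ (defined analogously but with $q^{2k}$-Frobenius twists down the diagonal) has determinant equal to $\det D_1 \cdot \det D_{-1}$: the $2k\times 2k$ matrix over $\F_{q^{2k}}$, after a suitable reindexing/permutation of rows and columns exploiting the fact that $q^{i+k}$-th powers cycle back, decomposes (up to sign) into a product of two $k\times k$ blocks, one of which is $D_1$ and the other $D_{-1}$ (the sign $\epsilon=-1$ arises precisely because the wrap-around in a space of dimension $2k$ versus $k$ flips a sign in the corner entries). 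This is the structural heart of the argument.

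Granting that block factorization, the proof is then short. First I would record that $f$ permutes $\F_{q^k}$ iff $\det D_1\neq 0$, and $f$ permutes $\F_{q^{2k}}$ iff $\det D_1\cdot\det D_{-1}\neq 0$, i.e.\ iff both $\det D_1\neq 0$ and $\det D_{-1}\neq 0$. So the ``only if'' direction (permutation on $\F_{q^{2k}}$ implies permutation on $\F_{q^k}$) is automatic and needs no hypothesis. For the ``if'' direction I must show that $\det D_1\neq 0$ forces $\det D_{-1}\neq 0$, and this is exactly where the norm hypothesis $\N_{q^k/q}(a_0)+\N_{q^k/q}(a_2)=0$ enters: by Lemma~\ref{sum} we have $\det D_1+\det D_{-1}=2(\N_{q^k/q}(a_0)+\N_{q^k/q}(a_2))=0$, hence $\det D_{-1}=-\det D_1$. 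Therefore $\det D_1\neq 0$ if and only if $\det D_{-1}\neq 0$, and the two permutation conditions coincide.

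The main obstacle I anticipate is the block-decomposition step: proving that the $2k\times 2k$ Dickson matrix of $f$ over $\F_{q^{2k}}$ factors as $\pm\det D_1\cdot\det D_{-1}$, and in particular pinning down that the second factor carries the sign $\epsilon=-1$ rather than $\epsilon=+1$ in its corner entries. One clean way to see this is to think representation-theoretically: the $\F_q$-algebra generated by the $q$-Frobenius $\sigma$ acting on $\F_{q^{2k}}$ is $\F_q[\sigma]\cong\F_q[t]/(t^{2k}-1)$, and $t^{2k}-1=(t^k-1)(t^k+1)$; the operator $f=a_0+a_1\sigma+a_2\sigma^2$ (with coefficients acting by multiplication) restricted to the two factor modules gives matrices whose determinants are, up to the usual twisting, $\det D_1$ and $\det D_{-1}$, the sign flip coming from $\sigma^k=-1$ on the second factor. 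Working this out carefully over a non-commutative coefficient ring (the $a_i$ do not commute with $\sigma$) is the delicate part; alternatively one can verify the factorization directly by a cofactor expansion mirroring the one in Lemma~\ref{sum}'s proof, but organized for the $2k\times 2k$ matrix. I expect that once this factorization is in hand, invoking Lemma~\ref{sum} and the hypothesis closes the argument in two lines, as above. It is also worth remarking that the hypothesis $q>2$ is implicit in how this gets used elsewhere in the paper, but the statement of Theorem~\ref{fer} as written holds for all $q$, since $2\neq 0$ in $\F_q$ for $q$ odd is not needed (the equality $\det D_1=-\det D_{-1}$ holds regardless once $\N_{q^k/q}(a_0)+\N_{q^k/q}(a_2)=0$).
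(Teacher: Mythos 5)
Your argument is correct and, in its decisive step, identical to the paper's: both proofs rest on the criteria ``$f$ permutes $\F_{q^k}$ iff $\det D_1\neq 0$'' and ``$f$ permutes $\F_{q^{2k}}$ iff $\det D_1\neq 0$ and $\det D_{-1}\neq 0$'', and then conclude via Lemma~\ref{sum}, which under the norm hypothesis gives $\det D_1=-\det D_{-1}$. The only divergence is how the second criterion is obtained: the paper simply quotes it from \"Ozbudak \cite[Theorem 3.6]{Ferruh}, whereas you propose to derive it by factoring the $2k\times 2k$ Dickson matrix of $f$ over $\F_{q^{2k}}$. That factorization --- the step you flag as delicate --- is true and easier than you fear, and there is no sign ambiguity to pin down: write $D_{\epsilon}=X+\epsilon C$, where $X$ is the banded upper-triangular part and $C$ carries the three wrap-around corner entries $a_2^{q^{k-2}}, a_1^{q^{k-1}}, a_2^{q^{k-1}}$; since $a_0,a_1,a_2\in\F_{q^k}$ one has $a_i^{q^{j+k}}=a_i^{q^j}$, and a direct check of the index arithmetic modulo $2k$ shows that the $2k\times 2k$ Dickson matrix is exactly $\left(\begin{smallmatrix} X & C\\ C & X\end{smallmatrix}\right)$; adding the second block row to the first and then subtracting the first block column from the second gives determinant $\det(X+C)\det(X-C)=\det D_1\det D_{-1}$ on the nose, with both signs of $\epsilon$ appearing automatically and no noncommutative module argument needed. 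So your route buys a self-contained proof of the input the paper imports from \cite{Ferruh} (modulo the classical fact that a linearized polynomial permutes $\F_{q^{2k}}$ iff its $2k\times 2k$ Dickson matrix is nonsingular), at the cost of carrying out this block computation, while the paper's proof is shorter because it cites that result; everything after this point in your proposal coincides with the paper's proof.
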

	\begin{proof}
		By \cite[Theorem 3.6]{Ferruh} $f$ induces a permutation of $\F_{q^{2k}}$ if and only if $\det D_1 \neq 0$ and $\det D_{-1}\neq 0$, where $D_1$ and $D_{-1}$ are defined as in Lemma \ref{sum}. The polynomial $f(x)$ induces a permutation of $\F_{q^k}$ if and only if $\det D_1 \neq 0$. 
If $\N_{q^k/q}(a_0)+\N_{q^k/q}(a_2)=0$ then Lemma \ref{sum} gives $\det D_1 = - \det D_{-1}$, and hence	$\det D_1 \neq 0$ if and only if $\det D_{-1} \neq 0$. This finishes the proof.
	\end{proof}

\begin{prop}
\label{gcd}
$\gcd(q^{2n+1}-1,q+1)=1$ for $q$ even and $\gcd(q^{2n+1}-1,q+1)=2$ for $q$ odd.
\end{prop}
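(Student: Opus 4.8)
The claim is an elementary number-theoretic fact: $\gcd(q^{2n+1}-1,q+1)$ equals $1$ when $q$ is even and $2$ when $q$ is odd. The plan is to compute this gcd by reducing $q^{2n+1}-1$ modulo $q+1$. Since $q\equiv -1\pmod{q+1}$, we have $q^{2n+1}\equiv(-1)^{2n+1}=-1\pmod{q+1}$, so $q^{2n+1}-1\equiv -2\pmod{q+1}$. Hence $\gcd(q^{2n+1}-1,q+1)=\gcd(-2,q+1)=\gcd(2,q+1)$.

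From here the two cases are immediate. If $q$ is even, then $q+1$ is odd, so $\gcd(2,q+1)=1$. If $q$ is odd, then $q+1$ is even, so $\gcd(2,q+1)=2$.

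There is essentially no obstacle here; the only thing to be a little careful about is the elementary lemma that $\gcd(a,m)=\gcd(a\bmod m,m)$, which justifies replacing $q^{2n+1}-1$ by its residue $-2$ modulo $q+1$. One should also note that the argument is uniform in $n$ (the exponent $2n+1$ is odd, which is all that is used), and that $q>2$ guarantees $q+1>3$ so the gcd statements are not vacuous, though in fact the computation is valid for all prime powers $q$.
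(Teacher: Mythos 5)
Your proof is correct and follows essentially the same route as the paper: both reduce $q^{2n+1}-1$ modulo $q+1$ using $q\equiv -1\pmod{q+1}$ to see that any common divisor must divide $2$, and then split into the parity cases for $q$. If anything, your formulation via $\gcd(q^{2n+1}-1,q+1)=\gcd(2,q+1)$ is slightly cleaner, since it pins the gcd down exactly rather than only ruling out odd prime divisors.
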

\begin{proof}
Let $p$ be a prime dividing both $q+1$ and $q^{2n+1}-1$. Hence $q+1 = pk$, for some integer $k$ and $q^{2n+1} - 1 = -2 + pkh$, for some integer $h$. Thus $p$ has to divide $2$, and hence $p=2$ if $q$ is odd and there is no such $p$ if $q$ is even.
\end{proof}

\section{The construction}\label{construction}

In $\F_{q^{2n+1}}^{4n+2}$ consider the $(4n+2)$-dimensional $\F_q$-subspace $V$ given by the set of vectors
\[ \left\{ v(a,b):=\left(a, b, a^q, b^q, \dots, a^{q^{2n}}, b^{q^{2n}}\right) : a, b \in \F_{q^{2n+1}}\right\}.\]
Then $\PG(V)$ is a $(4n+1)$-dimensional projective space over $\F_q$, since $\dim(V) = 4n+2$. 
For $(a,b)\neq (0,0)$ denote by $P(a,b)$ the point of $\PG(V)$ defined by the vector $v(a,b)$. 
Let $\Pi_1$ (resp. $\Pi_2$) be the $2n$-dimensional projective subspace consisting of the points $P(0,b)$ (resp. $P(a,0)$). For $c,d \in \F_q$ we will denote by 
$c P(a,b)+dP(a',b')$ the point of $\PG(V)$ on the line joining $P(a,b)$ and $P(a',b')$, defined by the vector $v(ca+da',cb+db')$. Let us define the following pointsets of $\PG(V)$:
\[ \cV_{\w} := \left\{ P(x^2,\omega x^{q+1}) : x \in \F_{q^{2n+1}} \setminus \{0\}\right\}, \]
where $\w \in \F_{q^{2n+1}} \setminus \{0\}$. Consider the invertible linear map of $V$ given by
\begin{align}
& v(a, b) \in V \mapsto v(\eta^2 a, \eta^{q+1} b) \in V, \label{phi}
\end{align}
where $\eta$ is a primitive element of $\F_{q^{2n+1}}$ and denote by $\phi$ the projectivity of $\PG(V)$ induced by \eqref{phi}. 

\begin{lemma}
	\label{lemma1}
\begin{enumerate}[\rm(i)]
    \item $|\cV_\w| = \frac{q^{2n+1}-1}{q-1}$,
    \item $\langle \phi \rangle$ is a group of order $\frac{q^{2n+1}-1}{q-1}$ acting regularly on points of $\Pi_1$, $\Pi_2$ and $\cV_\w$, $\w \in \F_{q^{2n+1}} \setminus \{0\}$.
\item If $\delta, \w \in \F_{q^{2n+1}}\setminus \{0\}$ then the projectivity induced by $v(a,b)\mapsto v(a, \delta  b)$ maps $\cV_{\w}$ to $\cV_{ \delta \w}$ and fixes $\Pi_1$ and $\Pi_2$.
\end{enumerate}
\end{lemma}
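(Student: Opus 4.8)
The plan is to verify each of the three items directly from the definition of $V$, $\cV_\w$, and $\phi$, treating $\F_{q^{2n+1}}^*$ as the ambient multiplicative group of scalars.

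For (i), a point $P(x^2,\w x^{q+1})$ is well defined precisely when $(x^2,\w x^{q+1})\neq(0,0)$, i.e. $x\neq 0$, so $x$ ranges over $\F_{q^{2n+1}}^*$. First I would determine when $P(x^2,\w x^{q+1})=P(y^2,\w y^{q+1})$: this happens iff $(y^2,\w y^{q+1})=\lambda(x^2,\w x^{q+1})$ for some $\lambda\in\F_q^*$. Writing $t=y/x\in\F_{q^{2n+1}}^*$, the two coordinate equations become $t^2=\lambda$ and $t^{q+1}=\lambda$, hence $t^2=t^{q+1}$, i.e. $t^{q-1}=1$, so $t\in\F_q^*$; conversely every $t\in\F_q^*$ gives $\lambda=t^2\in\F_q^*$ and a genuine coincidence of points. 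Thus the fibres of $x\mapsto P(x^2,\w x^{q+1})$ are exactly the cosets of $\F_q^*$ in $\F_{q^{2n+1}}^*$, and $|\cV_\w|=(q^{2n+1}-1)/(q-1)$.

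For (ii), the map \eqref{phi} sends $v(a,b)$ to $v(\eta^2 a,\eta^{q+1}b)$, so $\phi^j$ sends $v(a,b)$ to $v(\eta^{2j}a,\eta^{(q+1)j}b)$. On $\Pi_1$ (points $P(0,b)$) it acts as $P(0,b)\mapsto P(0,\eta^{(q+1)j}b)$, and on $\Pi_2$ as $P(a,0)\mapsto P(\eta^{2j}a,0)$. I would show $\phi$ has order exactly $(q^{2n+1}-1)/(q-1)$ by computing when $\phi^j$ fixes every point: on $\Pi_2$ this needs $\eta^{2j}\in\F_q^*$, i.e. $(q-1)\mid 2j\cdot\frac{1}{?}$ — more precisely $\eta^{2j}$ lies in $\F_q^*=\langle\eta^{(q^{2n+1}-1)/(q-1)}\rangle$, forcing $\frac{q^{2n+1}-1}{q-1}\mid 2j$, and the analogous condition on $\Pi_1$ gives $\frac{q^{2n+1}-1}{q-1}\mid (q+1)j$; since $\gcd\!\big(\tfrac{q^{2n+1}-1}{q-1},q+1\big)$ divides $2$ by Proposition~\ref{gcd} and $(q-1)\mid 2$ forces nothing extra, combining the two congruences yields $\frac{q^{2n+1}-1}{q-1}\mid j$. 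Hence $|\langle\phi\rangle|=(q^{2n+1}-1)/(q-1)$. Regularity on $\Pi_1$, $\Pi_2$, and $\cV_\w$ then follows because each orbit has size at most $|\langle\phi\rangle|$ and each of these three sets has exactly $(q^{2n+1}-1)/(q-1)$ points (for $\cV_\w$: $\phi$ sends $P(x^2,\w x^{q+1})$ to $P(\eta^2 x^2,\eta^{q+1}\w x^{q+1})=P((\eta x)^2,\w(\eta x)^{q+1})$, so $\phi$ permutes $\cV_\w$ and acts transitively since $x\mapsto\eta x$ generates $\F_{q^{2n+1}}^*/\F_q^*$), so each set is a single regular orbit.

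For (iii), the map $\psi:v(a,b)\mapsto v(a,\delta b)$ is clearly an invertible $\F_q$-linear map of $V$, fixing $\Pi_1=\{P(0,b)\}$ and $\Pi_2=\{P(a,0)\}$ setwise. Applied to a point of $\cV_\w$ it gives $\psi(P(x^2,\w x^{q+1}))=P(x^2,\delta\w x^{q+1})$, which by definition is the point of $\cV_{\delta\w}$ associated with the same $x$; as $x$ runs over $\F_{q^{2n+1}}^*$ this is a bijection $\cV_\w\to\cV_{\delta\w}$, so $\psi(\cV_\w)=\cV_{\delta\w}$.

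I do not anticipate a serious obstacle here; the only point requiring care is the order computation in (ii), where one must correctly combine the two divisibility conditions coming from $\Pi_1$ and $\Pi_2$ using Proposition~\ref{gcd} — writing the scalars as powers of the primitive element $\eta$ and reducing mod $\frac{q^{2n+1}-1}{q-1}$ keeps this transparent.
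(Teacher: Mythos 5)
Your (i) and (iii) are fine and your overall plan for (ii) — pin down $|\langle\phi\rangle|$ and then combine with the point counts — is exactly the paper's route, so this is essentially the same proof. However, part (ii) as written has a concrete gap, concentrated in one arithmetic fact that you never state. Write $N:=\frac{q^{2n+1}-1}{q-1}=q^{2n}+\cdots+q+1$. Your deduction that $N\mid 2j$ and $N\mid (q+1)j$ together force $N\mid j$ is not justified by what you cite: Proposition~\ref{gcd} only yields $\gcd(N,q+1)\le 2$, and the clause ``$(q-1)\mid 2$ forces nothing extra'' is not a meaningful (or true) statement. If $q$ is odd and $N$ happened to be even, the two divisibilities would only give $\frac{N}{2}\mid j$ (test $j=N/2$: both hold). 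What rescues the step is that $N$ is always odd — for $q$ odd it is a sum of $2n+1$ odd terms, for $q$ even it is $\equiv 1 \pmod 2$ — whence $N\mid 2j$ already gives $N\mid j$, and together with Proposition~\ref{gcd} also $\gcd(N,q+1)=1$. This observation must appear explicitly.

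The same omission undermines your regularity claim for $\Pi_1$ and $\Pi_2$: you prove transitivity only on $\cV_\w$ (via $x\mapsto\eta x$), and ``each orbit has size at most $|\langle\phi\rangle|$ while the set has $N$ points'' does not imply a single orbit. On $\Pi_2$ the action is $P(a,0)\mapsto P(\eta^{2j}a,0)$, so transitivity (equivalently, triviality of the stabilizer of $P(1,0)$, which is what the paper asserts) is precisely surjectivity of $j\mapsto 2j\pmod N$, i.e. $\gcd(2,N)=1$; on $\Pi_1$ you need $\gcd(q+1,N)=1$. Both reduce to the same fact, $N$ odd, combined with Proposition~\ref{gcd}. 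Finally, your argument only shows that $\phi^j=\mathrm{id}$ implies $N\mid j$, which gives $N\mid |\langle\phi\rangle|$; to conclude $|\langle\phi\rangle|=N$ add the one-line check that $\phi^N$ is the identity, since $\eta^N\in\F_q^*$ makes the map \eqref{phi} iterated $N$ times equal to scalar multiplication by $(\eta^N)^2$. With these one-line repairs your proof is complete and coincides with the paper's.
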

\begin{proof}
$(i)$ Let $P(x^2, \w x^{q+1})$, $P(y^2, \w y^{q+1})$ be points of $\cV_\w$. Thus $P(x^2, \w x^{q+1})=P(y^2, \w y^{q+1})$ if and only if $x/y \in \F_q$. 

$(ii)$ Consider the projectivity $\phi^i$ associated with the linear map given by $v(a, b) \in V \mapsto v(\eta^{2i} a, \eta^{(q+1)i} b) \in V$. Then $\phi^i$ is the identity if and only if $\eta^{(q-1)i} = 1$ and hence $\eta^i \in \F_q$. Moreover $|Stab_{\langle \phi \rangle}(P(1, 0))| = |Stab_{\langle \phi \rangle}(P(0, 1))| = |Stab_{\langle \phi \rangle}(P(1, \w))| = 1$. Since $|\langle \phi \rangle| = |\cV_{\w}| = |\Pi_1| = |\Pi_2|$, the result follows. 

$(iii)$ Straightforward.
\end{proof}

\begin{lemma}
	\label{lemma2}
The pointset of $\PG(V) \setminus \left( \Pi_1 \cup \Pi_2 \right)$ is partitioned into $\bigcup_{\w \in \F_{q^{2n+1}} \setminus \{0\}} \cV_w$. 
\end{lemma}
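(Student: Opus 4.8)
We must show that every point $P(a,b)$ of $\PG(V)$ with $(a,b)\neq(0,0)$ and $a\neq 0$, $b\neq 0$ lies on exactly one $\cV_\w$, and that distinct $\cV_\w$'s are disjoint from each other and from $\Pi_1\cup\Pi_2$. The strategy is to reduce everything to a norm computation.

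First I would record that $\cV_\w$ meets neither $\Pi_1$ nor $\Pi_2$: a point $P(x^2,\w x^{q+1})$ with $x\neq 0$ has both coordinates nonzero (since $\w\neq 0$), so it is not of the form $P(0,b)$ or $P(a,0)$. Next, for a point $P(a,b)$ with $a,b\neq 0$, asking whether $P(a,b)\in\cV_\w$ amounts to asking whether there is $x\in\F_{q^{2n+1}}^*$ and $\lambda\in\F_q^*$ with $a=\lambda x^2$ and $b=\lambda\w x^{q+1}$. Eliminating $\lambda$, this says $b/a=\w x^{q-1}$, i.e. $\w = (b/a)\,x^{-(q-1)}=(b/a)\,x^{1-q}$ for some $x\in\F_{q^{2n+1}}^*$; conversely, given such an $x$, one sets $\lambda=a/x^2$ and checks $\lambda\in\F_q^*$ iff $a^{q-1}=x^{2(q-1)}$, which is automatic once one is careful. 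Actually the cleanest bookkeeping is: $P(a,b)\in\cV_\w$ iff $b/a$ and $\w$ lie in the same coset of the subgroup $\{x^{q-1}:x\in\F_{q^{2n+1}}^*\}$ of $\F_{q^{2n+1}}^*$, equivalently iff $\N_{q^{2n+1}/q}(b/(a\w))=1$, since the image of the $(q-1)$-power map is exactly the norm-one subgroup (the kernel of $\N_{q^{2n+1}/q}$) — here one uses that the norm is surjective onto $\F_q^*$ and $|\ker \N_{q^{2n+1}/q}|=(q^{2n+1}-1)/(q-1)$.

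With this characterization in hand, the partition is immediate group theory: the multiplicative group $\F_{q^{2n+1}}^*$ is partitioned into the $q-1$ cosets of $\ker\N_{q^{2n+1}/q}$, and the map $\w\mapsto$ (coset of $b/(a\w)$) shows that for each point $P(a,b)$ outside $\Pi_1\cup\Pi_2$ the set of $\w$'s for which $P(a,b)\in\cV_\w$ is exactly one coset of $\ker\N_{q^{2n+1}/q}$; different points land in different structures only insofar as needed, but what we really need is just: each such $P(a,b)$ lies in at least one $\cV_\w$ (surjectivity of $\N$) and, if $P(a,b)\in\cV_\w\cap\cV_{\w'}$, then $\w/\w'\in\ker\N_{q^{2n+1}/q}$, whence by Lemma~\ref{lemma1}(iii) $\cV_\w=\cV_{\w'}$ — indeed the projectivity $v(a,b)\mapsto v(a,(\w'/\w)b)$ sends $\cV_\w$ to $\cV_{\w'}$ and one checks it also fixes $\cV_\w$ pointwise-as-a-set when $\w'/\w$ is a $(q-1)$-th power, using $|\cV_\w|=(q^{2n+1}-1)/(q-1)$ from Lemma~\ref{lemma1}(i) together with a cardinality count. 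Combining the two facts yields that the $\cV_\w$, as $\w$ ranges over coset representatives, partition $\PG(V)\setminus(\Pi_1\cup\Pi_2)$.

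The main obstacle is purely bookkeeping: correctly handling the scalar $\lambda\in\F_q^*$ so that the condition "$P(a,b)\in\cV_\w$" becomes exactly a norm condition rather than something off by the size of $\F_q^*$, and making sure the two counts ($|\cV_\w|$ and $|\ker\N_{q^{2n+1}/q}|$, both equal to $(q^{2n+1}-1)/(q-1)$) are used consistently so that "$\cV_\w$ and $\cV_{\w'}$ overlap" forces "$\cV_\w=\cV_{\w'}$". No deep input is needed here — Lemma~\ref{lemma1} does all the heavy lifting — so once the coset description is nailed down the partition statement drops out.
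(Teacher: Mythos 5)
Your central claim --- that $P(a,b)\in\cV_\w$ if and only if $\N_{q^{2n+1}/q}\bigl(b/(a\w)\bigr)=1$ --- is false, and the failure is exactly at the step you dismiss as ``automatic'': after solving $b/a=\w x^{q-1}$ for $x$, the scalar $\lambda=a/x^2$ need not lie in $\F_q$, and no other solution helps, since the full solution set of $x^{q-1}=b/(a\w)$ is the coset $x\F_q^*$ and replacing $x$ by $cx$ only multiplies $a/x^2$ by $c^{-2}\in\F_q$. Concretely, take $\w=1$ and the point $P(g,g)$ with $g\in\F_{q^{2n+1}}\setminus\F_q$: then $\N_{q^{2n+1}/q}(b/(a\w))=\N_{q^{2n+1}/q}(1)=1$, yet $P(g,g)\notin\cV_1$, because $\lambda x^2=g$ and $\lambda x^{q+1}=g$ force $x\in\F_q^*$ and hence $g\in\F_q$. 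A count shows the same: the points with $ab\neq 0$ satisfying the norm condition for a fixed $\w$ number $\left(\frac{q^{2n+1}-1}{q-1}\right)^2$, while $|\cV_\w|=\frac{q^{2n+1}-1}{q-1}$, so your condition is necessary but far from sufficient. The downstream claims inherit the error: $\N_{q^{2n+1}/q}(\w'/\w)=1$ does not force $\cV_\w=\cV_{\w'}$ (the projectivity $v(a,b)\mapsto v(a,(\w'/\w)b)$ maps $\cV_\w$ onto $\cV_{\w'}$ but does not stabilize $\cV_\w$), and your final assertion that the $\cV_\w$, with $\w$ ranging over coset representatives of $\ker \N_{q^{2n+1}/q}$, already partition $\PG(V)\setminus(\Pi_1\cup\Pi_2)$ is impossible on cardinality grounds: those $q-1$ sets cover only $q^{2n+1}-1$ points, whereas $|\PG(V)\setminus(\Pi_1\cup\Pi_2)|=\frac{(q^{2n+1}-1)^2}{q-1}$.

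What is true, and what the paper proves, is stronger: each point with $ab\neq 0$ lies in exactly one $\cV_\w$, and distinct $\w$ give disjoint $\cV_\w$ even when $\N_{q^{2n+1}/q}(\w_1/\w_2)=1$. For disjointness, if $P(x^2,\w_1x^{q+1})=P(y^2,\w_2y^{q+1})$ then $(x/y)^{2(q-1)}=1$, so $(x/y)^{q-1}=\pm 1$; the value $-1$ is excluded because $(q-1)$-st powers have norm $1$ while $\N_{q^{2n+1}/q}(-1)=(-1)^{2n+1}=-1$ for $q$ odd (and $-1=1$ for $q$ even), and then $\w_2=\w_1(x/y)^{q-1}=\w_1$ on the nose. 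The covering part then follows either by counting ($q^{2n+1}-1$ pairwise disjoint sets of size $\frac{q^{2n+1}-1}{q-1}$ exhaust the $\frac{(q^{2n+1}-1)^2}{q-1}$ points off $\Pi_1\cup\Pi_2$), or directly: write $a=\lambda x^2$ with $\lambda\in\F_q^*$ (possible since for $q$ odd non-squares of $\F_q$ remain non-squares in the odd-degree extension $\F_{q^{2n+1}}$, and trivial for $q$ even), and then $P(a,b)\in\cV_\w$ for the single value $\w=b/(\lambda x^{q+1})$, which is independent of the chosen representation. As written, your coset and norm bookkeeping does not yield the lemma.
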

\begin{proof}
If $|\cV_{\w_1} \cap \cV_{\w_2}| \ne 0$, there exist $x, y \in \F_{q^{2n+1}} \setminus \{0\}$ such that $\frac{x^2}{y^2} = \frac{\w_1 x^{q+1}}{\w_2 y^{q+1}} \in \F_q$. Then $\left(\frac{x}{y}\right)^{2(q-1)} = 1$ and $\left(\frac{x}{y}\right)^{q-1} = \pm 1$. Note that necessarily $\left(\frac{x}{y}\right)^{q-1} = 1$, otherwise $1 = \N_{q^{2n+1}/q}\left(\left(\frac{x}{y}\right)^{q-1}\right) = \N_{q^{2n+1}/q}(-1) = -1$. Hence we have that $\w_1 \left(\frac{x}{y}\right)^{q-1} = \w_2$, that is, $\w_1 = \w_2$.  
\end{proof}
\begin{prop} \label{cap}
The set $\cV_\w$ is a cap of $\PG(V)$.
\end{prop}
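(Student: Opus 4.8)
The plan is to convert collinearity of three points of $\cV_\w$ into a small polynomial system over $\F_{q^{2n+1}}$ and then eliminate one variable. Since the map $(a,b)\mapsto v(a,b)$ is an $\F_q$-linear bijection from $\F_{q^{2n+1}}^2$ (viewed as a $(4n+2)$-dimensional $\F_q$-space) onto $V$, three distinct points of $\cV_\w$, say $P(x^2,\w x^{q+1})$, $P(y^2,\w y^{q+1})$, $P(z^2,\w z^{q+1})$ with $x,y,z\in\F_{q^{2n+1}}\setminus\{0\}$, are collinear if and only if there is $(c,d,e)\in\F_q^3\setminus\{(0,0,0)\}$ with
\[ cx^2+dy^2+ez^2=0,\qquad cx^{q+1}+dy^{q+1}+ez^{q+1}=0, \]
the second relation obtained by dividing the corresponding one by $\w\neq 0$. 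Recalling from Lemma \ref{lemma1}(i) that two points $P(x^2,\w x^{q+1})$ and $P(y^2,\w y^{q+1})$ coincide precisely when $x/y\in\F_q$, the assumption that the three points are \emph{distinct} means exactly that no two of $x,y,z$ are $\F_q$-proportional. I would record this at the outset, since it is the source of every contradiction below.

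First I would show that $c,d,e$ must all be nonzero. If one of them vanishes, say $e=0$, then $cx^2+dy^2=0$ and $cx^{q+1}+dy^{q+1}=0$ with $(c,d)\neq(0,0)$; here $c=0$ forces $d=0$ and vice versa, so $c,d\neq 0$, and dividing the two relations gives $(x/y)^{q-1}=1$, i.e. $x/y\in\F_q$, contradicting distinctness. By symmetry none of $c,d,e$ can be $0$, so from now on $cde\neq 0$.

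The main step is to eliminate $z$. Raising the first equation to the $q$-th power (legitimate because $c,d,e\in\F_q$) gives $cx^{2q}+dy^{2q}+ez^{2q}=0$. Multiplying $cx^2+dy^2=-ez^2$ by $cx^{2q}+dy^{2q}=-ez^{2q}$ and comparing with $(cx^{q+1}+dy^{q+1})^2=(-ez^{q+1})^2=e^2z^{2q+2}$, I obtain
\[ (cx^2+dy^2)(cx^{2q}+dy^{2q})=(cx^{q+1}+dy^{q+1})^2. \]
Expanding both sides and cancelling the common terms $c^2x^{2q+2}$ and $d^2y^{2q+2}$ leaves $cd\,(x^2y^{2q}+x^{2q}y^2-2x^{q+1}y^{q+1})=0$, that is, $cd\,(xy^q-x^qy)^2=0$. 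Since $cd\neq 0$ this forces $xy^q=x^qy$, hence $(x/y)^q=x/y$ and $x/y\in\F_q$ — again contradicting distinctness. Therefore no three points of $\cV_\w$ are collinear.

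I do not expect a serious obstacle: the computation is short and characteristic-free (in characteristic $2$ one still has $(xy^q-x^qy)^2=x^2y^{2q}+x^{2q}y^2$, so the same identity closes the argument). The only points needing a little care are checking that every quantity divided out in the elimination is nonzero — which is precisely why ruling out $c,d,e=0$ is done first — and justifying the passage between ``collinear'' and ``nontrivial $\F_q$-linear relation among the three representative vectors'' for triples of \emph{distinct} points.
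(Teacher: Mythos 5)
Your proof is correct and follows essentially the same route as the paper: collinearity is turned into an $\F_q$-linear relation on the pairs $(x^2,\w x^{q+1})$, and the identity $(x^2)^{q+1}=(x^{q+1})^2$ is exploited to produce a perfect square, $(xy^q-x^qy)^2=0$, forcing an $\F_q$-ratio and contradicting distinctness. The only cosmetic difference is that the paper first normalizes to $\w=1$ and $P_1=P(1,1)$ via the projectivity group of Lemma \ref{lemma1} and so obtains $(y-y^q)^2=0$, whereas you keep the three points general and verify directly that the coefficients $c,d,e$ are nonzero.
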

\begin{proof}
Assume by contradiction that there are three points $P_1, P_2, P_3$ of $\cV_\w$ that are collinear. By Lemma \ref{lemma1}, we may assume that $\w=1$ and $P_1 = P(1,1)$. Let $P_2 = P(x^2, x^{q+1})$, $P_3 = P(y^2, y^{q+1})$, where $x, y \in \F_{q^{2n+1}} \setminus \F_q$ and $x/y \notin \F_q$. Hence there exist $\lambda, \rho \in \F_q \setminus \{0\}$ such that 
\begin{align*}
    & x^2 + \lambda y^2 = \rho, \\
    & x^{q+1} + \lambda y^{q+1} = \rho .
\end{align*}
Therefore $x^{2(q+1)} = (\rho - \lambda y^2)^{q+1} = (\rho - \lambda y^{q+1})^2$, that is, $(y - y^q)^2 = 0$, a contradiction.
\end{proof}

\begin{defin}
	Let $\alpha$ be $-1$ if $q$ is odd and an element of $\F_q \setminus \{0, 1\}$ if $q>2$ is even.
\end{defin}

\underline{From now on, we will assume $q>2$.}

\begin{prop} \label{cap1}
The set $\cV_1 \cup \cV_\alpha$ is a cap of $\PG(V)$. 
\end{prop}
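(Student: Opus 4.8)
The plan is to show that $\cV_1\cup\cV_\alpha$ contains no three collinear points, splitting into three cases according to how the three points are distributed between $\cV_1$ and $\cV_\alpha$. The case when all three lie in the same $\cV_\w$ is already handled by Proposition~\ref{cap}, so the work is in the two mixed cases. By Lemma~\ref{lemma1}(ii) we may use the transitivity of $\langle\phi\rangle$ on $\cV_1$ to normalise one of the points on $\cV_1$ to $P(1,1)$; this reduces the number of free parameters.

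For the case of two points on $\cV_1$ and one on $\cV_\alpha$, write the points as $P_1=P(1,1)$, $P_2=P(x^2,x^{q+1})$ with $x\in\F_{q^{2n+1}}\setminus\F_q$, and $P_3=P(y^2,\alpha y^{q+1})$ with $y\in\F_{q^{2n+1}}\setminus\{0\}$. Collinearity means there are $\lambda,\rho\in\F_q$, not both zero (and with appropriate nonvanishing), with $x^2+\lambda y^2=\rho$ and $x^{q+1}+\alpha\lambda y^{q+1}=\rho$. As in Proposition~\ref{cap}, I would raise the first equation to the $(q+1)$ and the second to the square, obtaining $(\rho-\lambda y^2)^{q+1}=(\rho-\alpha\lambda y^{q+1})^2$; expanding and cancelling the common terms (using $\rho,\lambda,\alpha\in\F_q$) should reduce to an identity forcing either $y\in\F_q$ (so $P_3\in\Pi$, impossible) or a relation like $\alpha^2=\alpha$ or $\alpha=1$ contradicting the choice of $\alpha$. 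When $q$ is odd with $\alpha=-1$ the cross terms change sign, and one should get $(y-y^q)^2\cdot(\text{something involving }\alpha)=0$ with that something a nonzero element of $\F_q$; for $q$ even with $\alpha\notin\{0,1\}$ one needs to be slightly more careful because squaring is a bijection, but the same expansion should still isolate a factor $(y+y^q)^2=(y-y^q)^2$ times a unit.

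For the remaining case of one point on $\cV_1$ and two on $\cV_\alpha$, the cleanest approach is to invoke Lemma~\ref{lemma1}(iii): the projectivity $v(a,b)\mapsto v(a,\alpha^{-1}b)$ maps $\cV_\alpha$ to $\cV_1$ and $\cV_1$ to $\cV_{\alpha^{-1}}$, so this case is projectively equivalent to the previous one (two points on $\cV_1$, one on $\cV_{\alpha^{-1}}$), and the same computation applies provided $\alpha^{-1}\notin\{0,1\}$, which holds. Thus it suffices to carry out the mixed computation once with a general nonzero $\beta\in\F_q\setminus\{1\}$ in place of $\alpha$, covering both $\beta=\alpha$ and $\beta=\alpha^{-1}$.

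The main obstacle I anticipate is the characteristic-two subcase: the trick in Proposition~\ref{cap} of going from $u^2=v^2$ to $u=v$ fails in odd characteristic only up to sign, but in even characteristic $t\mapsto t^2$ is injective, so after the expansion one genuinely gets a linear (over $\F_q$) condition rather than a squared one, and one must check that the resulting equation in $x,y$ cannot hold for $x,y$ outside $\F_q$ — this is exactly where the hypothesis $\alpha\neq 0,1$ and $q>2$ must be used, and possibly where the norm/linearized-polynomial machinery from Theorem~\ref{fer} or a direct argument about $x/y$ enters. I would keep the algebra symbolic, reduce to a single polynomial identity in $x$ and $y$ over $\F_q$ whose only solutions have $x/y\in\F_q$, and verify that the coefficient that must vanish is a nonzero element of $\F_q$ precisely because of the constraints on $\alpha$.
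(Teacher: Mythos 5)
Your overall architecture is the same as the paper's (use Proposition~\ref{cap} for three points in one variety, use the projectivities of Lemma~\ref{lemma1} to reduce the two mixed cases to one and to normalise a point, then eliminate $x$ via $(x^2)^{q+1}=(x^{q+1})^2$), but the decisive step --- showing the resulting equation in $y$ has no admissible solution --- is missing, and your predictions of how the algebra resolves are wrong in both characteristics. With your normalisation ($P_1=P(1,1)$, $P_2=P(x^2,x^{q+1})$, $P_3=P(y^2,\alpha y^{q+1})$) the elimination gives $\rho\bigl(y^2-2\alpha y^{q+1}+y^{2q}\bigr)=\lambda(1-\alpha^2)\,y^{2(q+1)}$. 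For $q$ odd ($\alpha=-1$) this is $(y+y^q)^2=0$, not ``$(y-y^q)^2$ times a nonzero element'': it forces $y^{q-1}=-1$, and the contradiction is \emph{not} $y\in\F_q$ nor anything about $\alpha$, but the norm argument over the odd-degree extension, $1=\N_{q^{2n+1}/q}(y^{q-1})=\N_{q^{2n+1}/q}(-1)=-1$ (this is exactly the mechanism the paper uses, and it appears nowhere in your sketch). For $q$ even the identity does \emph{not} ``isolate a factor $(y+y^q)^2$ times a unit'' equal to zero: one gets $(y+y^q)^2=\lambda\rho^{-1}(1+\alpha)^2y^{2(q+1)}$ with nonzero right-hand side, i.e.\ $y+y^q=c\,y^{q+1}$ with $c\in\F_q\setminus\{0\}$; the contradiction then comes from setting $z=1/y$, so $z+z^q=c$, hence $z^{q^2}=z$ and $z\in\F_{q^2}\cap\F_{q^{2n+1}}=\F_q$, giving $z+z^q=0\neq c$. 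Neither this step nor the field-intersection fact is identified in your proposal (Theorem~\ref{fer}, which you invoke as a possible rescue, is irrelevant here --- it is used only for the completeness statement, Theorem~\ref{main}).

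Two further concrete slips: your fallback justification ``$y\in\F_q$, so $P_3\in\Pi$, impossible'' is false --- $y\in\F_q$ gives $P_3=P(1,\alpha)\in\cV_\alpha$, a perfectly legitimate point, so even if the algebra had reduced to $y\in\F_q$ you would still have to exclude collinearity of $P(1,1)$, $P(1,\alpha)$ and $P(x^2,x^{q+1})$. Also note that the paper normalises the \emph{single} point lying in the other variety (e.g.\ $P_1=P(1,\alpha)$, with both free points in $\cV_1$), which makes the even-characteristic identity collapse directly to $y+y^q=(1+\alpha)\sqrt{\rho/\lambda}\in\F_q\setminus\{0\}$ and shortens the argument; your choice is workable but leads to the slightly messier equation above. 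Your reduction of the ``one point in $\cV_1$, two in $\cV_\alpha$'' case via Lemma~\ref{lemma1}(iii) is fine and matches the paper.
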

\begin{proof}
Assume by contradiction that there are three points $P_1, P_2, P_3$ of $\cV_1 \cup \cV_{\alpha}$ that are collinear. 

Let $q$ be odd. We may assume that $P_1 \in \cV_{-1}$ and $P_2, P_3 \in \cV_1$. Indeed, if two of these three points were on $\cV_{-1}$ we can apply the involutory projectivity of $\PG(V)$ induced by $v(a, b) \in V \mapsto v(a, -b) \in V$ switching $\cV_1$ and $\cV_{-1}$. By Lemma \ref{lemma1}, we may assume that $P_1 = P(1,-1)$. Let $P_2 = P(x^2, x^{q+1})$, $P_3 = P(y^2, y^{q+1})$, where $x/y \notin \F_q$. Hence there exist $\lambda, \rho \in \F_q \setminus \{0\}$ such that 
\begin{align*}
    & x^2 + \lambda y^2 = \rho, \\
    & x^{q+1} + \lambda y^{q+1} = - \rho.
\end{align*}
Therefore $x^{2(q+1)} = (\rho - \lambda y^2)^{q+1} = (\rho + \lambda y^{q+1})^2$, that is, $(y + y^q)^2 = 0$. In particular $1 = \N_{q^{2n+1}/q}(y^{q-1}) = \N_{q^{2n+1}/q}(-1) = -1$, a contradiction.

Let $q$ be even. 
By Lemma \ref{lemma1} $(iii)$ the projectivity $\pi$ induced by the linear map $v(a,b)\mapsto v(a,b \alpha^{-1})$ maps $\cV_{\alpha}$ to $\cV_1$ and $\cV_1$ to $\cV_{\alpha^{-1}}$. 
We may choose indices for the three collinear points such that either $P_1 \in \cV_\alpha$ and $P_2,P_3 \in \cV_1$, or 
$P_1 \in \cV_1$ and $P_2,P_3 \in \cV_\alpha$, i.e. 
$P_1^\pi \in \cV_{\alpha^{-1}}$ and $P_2^\pi,P_3^\pi \in \cV_1$.
We provide the proof for the former case. The latter case follows in the same way with $\alpha^{-1} \in \F_q \setminus \{0,1\}$ playing the role of $\alpha$ and $P_i^\pi$ playing the role of $P_i$, for $i=1,2,3$.

By Lemma \ref{lemma1} 
we may assume also $P_1=P(1,\alpha) \in \cV_{\alpha}$ and put $P_2 = P(x^2, x^{q+1})$, $P_3 = (y^2, y^{q+1})$, where  $x/y \notin \F_q$. Hence there exist $\lambda, \rho \in \F_q \setminus \{0\}$ such that 
\begin{align*}
    & x^2 + \lambda y^2 = \rho, \\
    & x^{q+1} + \lambda y^{q+1} = \alpha \rho .
\end{align*}
Therefore $x^{2(q+1)} = (\rho + \lambda y^2)^{q+1} = (\alpha \rho + \lambda y^{q+1})^2$, that is, $y + y^q = \sqrt{\frac{\rho (1+\alpha^2)}{\lambda}} \in \F_q \setminus \{0\}$. In particular $y^{q^2} = y$, i.e., $y \in \F_{q^2}\cap \F_{q^{2n+1}}=\F_q$, a contradiction, since then $y+y^q$ was $0$. 
\end{proof}

\begin{theorem}\label{main}
Every point of $\PG(V) \setminus (\cV_1 \cup \cV_\alpha)$ lies on at least a line joining a point of $\cV_1$ and a point of $\cV_\alpha$. 
\end{theorem}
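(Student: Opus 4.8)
The plan is to treat the point $P$ of $\PG(V)\setminus(\cV_1\cup\cV_\alpha)$ according to the trichotomy supplied by Lemma \ref{lemma2}: $P\in\Pi_1$, $P\in\Pi_2$, or $P\in\cV_\w$ with $\w\in\F_{q^{2n+1}}^*\setminus\{1,\alpha\}$. The first two cases are elementary and I would dispatch them directly. If $P=P(0,b)\in\Pi_1$ I would look for a line joining $P(x^2,x^{q+1})\in\cV_1$ to $P(y^2,\alpha y^{q+1})\in\cV_\alpha$ with $y$ an $\F_q$-multiple of $x$; then $x^{q+1}$ and $y^{q+1}$ are proportional over $\F_q$, and requiring the line to pass through $P$ amounts to asking that $x^{q+1}\in\F_q^*\cdot b$, which is solvable because by Proposition \ref{gcd} the image of $x\mapsto x^{q+1}$ in $\F_{q^{2n+1}}^*$ has index $1$ or $2$, and in the latter case a nonsquare of $\F_q$ (which exists since $q>2$) moves $b$ into that image; the case $P\in\Pi_2$ is symmetric. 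Only Lemma \ref{lemma1} and Proposition \ref{gcd} are needed here.

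For the remaining case I would first use that $\phi$ stabilises $\cV_1$ and $\cV_\alpha$ while acting regularly on $\cV_\w$ (Lemma \ref{lemma1}) to reduce to $P=P(1,\w)$. Writing a point of $\cV_\alpha$ as $P(t^2,\alpha t^{q+1})$ with $t\in\F_{q^{2n+1}}^*$, and using the substitution $X=cx^2$ — which ranges over all of $\F_{q^{2n+1}}^*$ and for which $cx^{q+1}$ becomes a well defined function $g(X)$ with $g(X)^2=X^{q+1}$ and fixed set $\F_q^*$ — one translates the condition ``the line through $P$ and $P(t^2,\alpha t^{q+1})$ meets $\cV_1$'' into a condition on $t$. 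After eliminating the projective parameter from $\PG(1,q)$ (i.e.\ squaring the relevant equation and simplifying) this reduces, for $q$ odd, to the vanishing, at some nonzero $t$, of the linearized trinomial
\[
f(t):=t^{q^2}+\bigl(\w^q-K\w\bigr)t^q-Kt,\qquad K:=\frac{g(1-\w^2)}{1-\w^2},\quad K^2=(1-\w^2)^{q-1},
\]
the case $q$ even being entirely parallel with $\alpha$ in the role of $-1$. Equivalently: $P$ lies on a line joining a point of $\cV_1$ to a point of $\cV_\alpha$ if and only if $f$ is \emph{not} a permutation of $\F_{q^{2n+1}}$.

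It remains to show that $f$ is never a permutation of $\F_{q^{2n+1}}$ when $\w\notin\{0,1,\alpha\}$, and this is where Theorem \ref{fer} enters. With $a_0=-K$, $a_1=\w^q-K\w$, $a_2=1$ one has $\N_{q^{2n+1}/q}(a_0)+\N_{q^{2n+1}/q}(a_2)=1-\N_{q^{2n+1}/q}(K)$; using $\N_{q^{2n+1}/q}(z)^{(q-1)/2}=z^{(q^{2n+1}-1)/2}$ (the quadratic character of $\F_{q^{2n+1}}$) together with $K^2=(1-\w^2)^{q-1}$ one gets $\N_{q^{2n+1}/q}(K)=\N_{q^{2n+1}/q}(1-\w^2)^{\,q-1}=1$, so the hypothesis of Theorem \ref{fer} holds. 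Hence $f$ permutes $\F_{q^{2n+1}}$ if and only if it permutes $\F_{q^{4n+2}}$, and it suffices to exhibit a nonzero root of $f$ in $\F_{q^{4n+2}}$. Over $\F_{q^{4n+2}}$ every element of $\F_{q^{2n+1}}$ is a square — so $1-\w^2=\theta^2$ for some $\theta$, whence $K=\pm\theta^{q-1}$ and $\theta^2+\w^2=1$ — and $-1$ is a square as well; rewriting $f(t)=0$ as $(t^q+\w t)^q=K(t+\w t^q)$ and exploiting these identities one produces the required root, which completes the proof.

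The step I expect to be the crux is the last one — constructing the nonzero root of $f$ over $\F_{q^{4n+2}}$ (and, to a lesser extent, the bookkeeping that turns the geometric condition in the $\cV_\w$ case into the clean equation $f(t)=0$). It is exactly for this that one passes from $\F_{q^{2n+1}}$ to its quadratic extension via Theorem \ref{fer}: over $\F_{q^{4n+2}}$ the element $1-\w^2$ becomes a square, and the identity $\theta^2+\w^2=1$ gives the equation $(t^q+\w t)^q=K(t+\w t^q)$ enough structure to be solved directly.
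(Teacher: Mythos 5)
Your handling of $\Pi_1,\Pi_2$ (via $x^{q+1}\in\F_q^*\cdot b$ and Proposition \ref{gcd}) is fine, and your reduction of the case $P=P(1,\w)$ to the vanishing of a linearized trinomial, followed by the norm check and Theorem \ref{fer}, is exactly the paper's strategy: granting your unstated bookkeeping, your single $f$ with $K=g(1-\w^2)/(1-\w^2)$ coincides with the relevant one of the paper's two trinomials $F_1,F_2$ (which one depends on the quadratic character of $\w^2-1$ in $\F_{q^{2n+1}}$). But the proposal stops exactly where the real work begins: ``exploiting these identities one produces the required root'' is the hard half of the proof, and the uniform construction over $\F_{q^{4n+2}}$ that you envisage does not exist in the shape you describe. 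When $\w^2-1$ is a non-square in $\F_{q^{2n+1}}$, the paper's root is $\xi/L$ with $\xi^{q-1}=\w+T$ and $L^2=-\lambda$, and the existence of $\xi$ rests on Corollary \ref{cor1}, i.e.\ on $\N_{q^{4n+2}/q}(\w+T)=1$, which uses $T\notin\F_{q^{2n+1}}$ (so that $T^{q^{2n+1}}=-T$). When $\w^2-1$ is a square this collapses: $T\in\F_{q^{2n+1}}$, $\N_{q^{4n+2}/q}(\w+T)=\N_{q^{2n+1}/q}(\w+T)^2$ need not be $1$, and the natural Frobenius-eigenvector ansatz fails outright (for $t^q=(T-\w)t$ one gets $F_2(t)/t=2T^q(T-\w)\neq 0$, for $t^q=-(\w+T)t$ one gets $2T^q(\w+T)\neq 0$). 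The paper treats this case by a genuinely different construction inside $\F_{q^{2n+1}}$ itself, namely $y=(\w+T)u^q-u$ with $u^{q+1}=\delta(\w-T)$, and Theorem \ref{fer} is not used there at all. Nothing in your proposal supplies an analogue, so the crux step is missing for roughly half the values of $\w$, not merely deferred.

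Three smaller points. Your norm argument is incomplete: $K^2=(1-\w^2)^{q-1}$ only yields $\N_{q^{2n+1}/q}(K)=\pm1$; to get $+1$ you must use the definition of $g$ (writing $1-\w^2=cx^2$ with $c\in\F_q^*$ gives $K=x^{q-1}$, hence $\N_{q^{2n+1}/q}(K)=\N_{q^{2n+1}/q}(x)^{q-1}=1$). Your claimed equivalence ``$P$ lies on such a line iff $f$ is not a permutation'' also needs the non-degeneracy checks the paper performs: from a nonzero root $t$ one must still produce $\rho\in\F_q\setminus\{0\}$ and $x\neq 0$, i.e.\ rule out $(\w t+t^q)^2=t^2(\w^2-1)$ and $\w t+t^q=0$; this is true but requires an argument (the paper verifies it explicitly for its constructed root in the square case), and you do not address it. Finally, ``the case $q$ even being entirely parallel'' cannot be taken literally, since your derivation runs through exponents $(q\pm1)/2$ and quadratic characters; the even case is in fact much easier and is settled in the paper by the explicit choice $t^{q+1}=(\w(\alpha+1)+\alpha)/(\alpha^2+1)$, using only Proposition \ref{gcd}.
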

\begin{proof}
Let $R$ be a point of $\PG(V) \setminus (\cV_1 \cup \cV_\alpha)$. If $R$ belongs to $\Pi_1$ or $\Pi_2$, we may assume by Lemma \ref{lemma1} that $R =P (0,1)$ or $R =P (1,0)$, respectively. If $R \notin \Pi_1 \cup \Pi_2$, by Lemma \ref{lemma2}, there is $\w \in \F_{q^{2n+1}} \setminus \{0, 1, \alpha\}$ such that $R \in \cV_\w$. Moreover, by Lemma \ref{lemma1}, we may assume that $R = P(1, \w)$.   

\underline{Let $q$ be even.} If $R = P(0,1) \in \Pi_1$, then $R=P(1,1) + P(1, \alpha)$. Similarly, if $R = P(1,0) \in \Pi_2$, then $R = \alpha P(1,1) + P(1, \alpha)$. Assume that $R=P(1,\w) \notin \Pi_1 \cup \Pi_2$. Let $t \in \F_{q^{2n+1}}$ be such that 
\begin{equation}
\label{t}
t^{q+1} = \frac{\w(\alpha+1) + \alpha}{\alpha^2 + 1}.
\end{equation}
Note that such a $t$ always exists because of Proposition \ref{gcd}. Define $x = t + \frac{\alpha}{\alpha+1}$ and $y = t + \frac{1}{\alpha + 1}$. From \eqref{t}, since $\w \in \F_{q^{2n+1}} \setminus \{0,1,\alpha\}$ and $\alpha \in \F_q \setminus \{0,1\}$, it follows that $x\neq 0$ and $y\neq 0$. Then straightforward calculations show that the following equations are satisfied
\begin{align*}
    & x^2 + y^2 = 1, \\
    & x^{q+1} + \alpha y^{q+1} = \w .
\end{align*}
Therefore $R = P(x^2, x^{q+1})+ P(y^2, \alpha y^{q+1})$.

\underline{Let $q$ be odd.} If $R = P(0,1) \in \Pi_1$, then $R = P(1,1) - P(1, -1)$. Similarly, if $R = P(1,0) \in \Pi_2$, then $R = P(1,1) + P(1, -1)$. Assume that $R = P(1, \w) \notin \Pi_1 \cup \Pi_2$, with $\w \in \F_{q^{2n+1}} \setminus \{0, 1, -1\}$. We want to show that for all $\w \in \F_{q^{2n+1}} \setminus \{0,1,-1\}$, there exist $\lambda, \rho \in \F_q \setminus \{0\}$ and $x, y \in \F_{q^{2n+1}} \setminus \{0\}$ such that the following equations are satisfied
\begin{align}
& x^2 + \lambda y^2 = \rho, \label{eq1} \\
& x^{q+1} - \lambda y^{q+1} = \rho \w. \label{eq2} 
\end{align} 
If we can guarantee this, then $R=P(x^2,x^{q+1})+\lambda P(y^2,-y^{q+1})$. 


Clearly $(x^2)^{q+1}=(x^{q+1})^2$ and hence necessarily $(\rho - \lambda y^2)^{q+1} - (\rho \w + \lambda y^{q+1})^2 = 0$. Then $\rho-\lambda y^2 = \lambda y^{2q}+\rho \w^2 + 2\w \lambda y^{q+1}$ and after subtracting $\w^2(\rho-\lambda y^2)$ from both sides we have:
\begin{align*}
    & (\rho-\lambda y^2)-\w^2(\rho-\lambda y^2)=(\lambda y^{2q}+\rho \w^2 + 2\w \lambda y^{q+1})-\w^2(\rho-\lambda y^2), \\
    & (\rho-\lambda y^2)(1-\w^2)=\lambda(\w y+y^q)^2.
\end{align*}
By \eqref{eq1}, we get $x^2 = -\lambda (\w y + y^q)^2/(\w^2-1)$. Hence $-\lambda/(\w^2-1)$ has to be a non-zero square in $\F_{q^{2n+1}}$. This can be obtained for example by taking 
\begin{equation}
\label{lambda}
\lambda=-\N_{q^{2n+1}/q}(\w^2-1),
\end{equation}
which we will assume from now. Then
\begin{align}
&    x = \pm (\w^2-1)^{\frac{q+q^2+\ldots+q^{2n}}{2}} \left(\w y + y^q\right). \label{eq3}
\end{align}
Plugging \eqref{eq3} in \eqref{eq1} we obtain:
\begin{equation}
\label{rho}
\rho=-\lambda \left(\frac{y^2+y^{2q}+2\w y^{q+1}}{\w^2-1}\right).
\end{equation}
Since $\rho$ has to be in $\F_q \setminus \{0\}$, we must have $\rho^q=\rho$, which holds if and only if
\begin{align*}
    & (\w^2-1) (y^{2q^2} + 2\w^q y^{q^2+q}) + (\w^2-\w^{2q}) y^{2q} - (\w^{2q}-1) (2\w y^{q+1} + y^2) \\
    & = (\w^2-1) F_1(y) F_2(y) = 0,
\end{align*}
where
\begin{align*}
    & F_1(y) = y^{q^2} + \left(\w^q + \w (\w^2-1)^{\frac{q-1}{2}}\right) y^q + (\w^2-1)^{\frac{q-1}{2}} y, \\
    & F_2(y) = y^{q^2} + \left(\w^q - \w (\w^2-1)^{\frac{q-1}{2}}\right) y^q - (\w^2-1)^{\frac{q-1}{2}} y. 
\end{align*}
If $(\w^2-1)$ is a square in $\F_{q^{2n+1}}$, then
\[x^{q+1}=\frac{-\lambda}{(\w^2-1)^{\frac{q+1}{2}}}(\w y+y^q)^{q+1},\]
if $(\w^2-1)$ is a non-square in $\F_{q^{2n+1}}$, then
\[x^{q+1}=\frac{\lambda}{(\w^2-1)^{\frac{q+1}{2}}}(\w y+y^q)^{q+1}.\]
In the former case \eqref{eq2} is equivalent to 
\[\frac{\w y+y^q}{(\w^2-1)^{\frac{q+1}{2}}} F_2(y)=0,\]
in the latter case \eqref{eq2} is equivalent to 
\[\frac{\w y+y^q}{(\w^2-1)^{\frac{q+1}{2}}} F_1(y)=0.\]
Therefore, equations \eqref{eq1} and \eqref{eq2} are both satisfied  with $y\in \F_{q^{2n+1}}\setminus \{0\}$, where $y$ is a root of $F_1$ or $F_2$ according as $\w^2-1$ is a non-square or a square in $\F_{q^{2n+1}}$, respectively, and with $\lambda$, $x$, $\rho$ defined as in  \eqref{lambda},\eqref{eq3},\eqref{rho}. It is easy to see that $\w y + y^q = 0$ and $F_i(y)=0$ for some $i\in\{1,2\}$ cannot hold simultaneously and hence $x\neq 0$. 
Moreover $\rho=0$ if and only if $y^2+y^{2q}+2\w y^{q+1}=0$, or equivalently $(y\w+y^q)^2=y^2(\w^2-1)$. This clearly cannot hold when $\w^2-1$ is a non-square in $\F_{q^{2n+1}}$. The case when $\w^2-1$ is a square will be examined later.

Denote by $T\in \F_{q^{4n+2}}$ one of the square-roots of $\w^2-1$ and note that $(\w-T)(\w+T)=1$.

\medskip

CASE 1: Assume that $\w^2-1$ is a square in $\F_{q^{2n+1}}$. Choose some $\delta \in \F_q$ such that $\delta \left(\w - T\right)$ is a non-zero square in $\F_{q^{2n+1}}$. 
Note that for $T$ we have two choices, but the choice of $\delta$ does not depend on which square-root of $\w^2-1$ we took since $(\w-T)(\w+T)$ is a square and hence $(\w-T)$ is a square (non-square) if and only if $(\w+T)$ is a square (non-square). One might choose $\delta$ to be
$\N_{q^{2n+1}/q}(\w-T)$ since then $\delta(\w-T)=(\w-T)^{2+q+\ldots+q^{2n}}$, which is indeed a square, however, in what follows we won't need the exact value of $\delta$. 
Let $u \in \F_{q^{2n+1}}$ be such that $u^{q+1} = \delta \left(\w - T\right)$. Such an element $u$ always exists because of Proposition \ref{gcd}. Define $y$ as
\begin{equation}
\label{square_y}
y = \left(\w + T\right) u^q - u . 
\end{equation}
Then $y\neq 0$ since otherwise $u=u^q(\w+T)$ and hence
$u^{q+1}=\delta(\w-T)$ would give the equations
\begin{align*}
    & u^{2q}(\w+T)=\delta(\w-T), \\
    & u^2/(\w+T)=\delta(\w-T).
\end{align*}
The latter one is equivalent to $u^2=\delta$, and substituting this to the former one gives $\w+T=\w-T$, a contradiction.
Some calculations show that the following equalities hold true:
\begin{align*}
    y^q & = \left(\w + T\right) u - u^q, \\
    y^{q^2} & = \left(\w^q + T^{q}\right) u^q - \left(\w^q - T^{q}\right) \left(\w + T\right) u, \\
    F_2(y) & = u T^{q-1} \left(T^2 - \w^2+1\right) = 0.
\end{align*}
It remains to prove that $\rho$ defined as in \eqref{rho} is non-zero, that is, $y^2+y^{2q}+2\w y^{q+1}=(y\w+y^q)^2-y^2(\w^2-1) \neq 0$. The contrary holds if and only if
$y\w + y^q - y T =0$ or $y\w + y^q + y T =0$. 
In the former case, by \eqref{square_y}:
\begin{align*}
    y (\w-T) + y^q & = ((\w+T)u^q-u)(\w-T) +(\w+T)u-u^q \\
    & = 2Tu-u^q(T^2+1-\w^2) \\
    & = 0,
\end{align*}
a contradiction since 
$T^2+1-\w^2=0$ and $2Tu \neq 0$.
In the latter case, by \eqref{square_y}:
\begin{align*}
    y (\w+T) + y^q & = ((\w+T)u^q-u)(\w+T)+(\w+T)u-u^q \\
    & = u^q ((\w + T)^2-1)\\
    & = 0,
\end{align*}
that is, $T^2+2\w T+ \w^2-1=2T^2+2\w T= 0$, and hence $\w + T = 0$, a contradiction.
\medskip

CASE 2: Assume that $\w^2 - 1$ is a non-square in $\F_{q^{2n+1}}$. By Theorem \ref{fer}, assuming that $k=2n+1$, to find a non-zero root of $F_1$ in $\F_{q^{2n+1}}$ it is enough to show that $F_1$ has a non-zero root in $\F_{q^{4n+2}}$. Take $\lambda \in \F_q \setminus \{0\}$ as in \eqref{lambda}, so $-\lambda$ is a non-square in $\F_q$. By Corollary \ref{cor1}, assuming that $k=2n+1$, we may choose $\xi \in \F_{q^{4n+2}}$ such that $\xi^{q-1}= \w + T$. Denote by $L\in \F_{q^{4n+2}}$ any of the square-roots of $-\lambda$. We claim that 
$\xi / L$ is a root of $F_1$. 
Note that $(-\lambda)^{(q-1)/2} = -1$, 
i.e., $L^{q}=L (L^2)^{(q-1)/2} = -L$. Then
\begin{align*}
    F_1\left(\xi/L\right)L = & \xi^{q^2} - \left(\w^q + \w T^{q-1}\right) \xi^q + T^{q-1} \xi \\
	 = & \xi \left(\w + T\right)^{q+1}-\left(\w^q + \w T^{q-1}\right) \left(\w + T\right)\xi + T^{q-1} \xi.
\end{align*}
After dividing by $\xi$ we obtain
\begin{align*}
& \left( \w^q + T^q\right) \left(\w + T\right) - \left(\w^q + \w T^{q-1}\right) 
\left(\w + T\right) + T^{q-1} \\
& = \w^{q+1} + \w^q T + \w T^q + T^{q+1} - \w^{q+1} - \w^q T - \w^2 T^{q-1} - \w T^{q} + T^{q-1} \\	
& = T^{q+1} - \w^2 T^{q-1}+ T^{q-1}.
\end{align*}
Note that $T^2 - \w^2 = -1$, thus $T^{q+1} - \w^2 T^{q-1} = -T^{q-1}$. Therefore $F_1\left(\xi/L\right) = 0$.
\end{proof}

The following result holds true. 

\begin{theorem}
	\label{complete}
The cap $\cV_1 \cup \cV_\alpha$ of $\PG(4n+1,q)$, $q>2$, is a complete cap of size $2(q^{2n+1}-1)/(q-1)$.
\end{theorem}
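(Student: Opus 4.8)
The plan is to assemble three facts already in place: Proposition \ref{cap1} (that $\cV_1\cup\cV_\alpha$ is a cap), Lemmas \ref{lemma1} and \ref{lemma2} (for the cardinality), and Theorem \ref{main} (for completeness).

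First I would pin down the ambient space and the size. Since $\dim_{\F_q} V = 4n+2$, we have $\PG(V)=\PG(4n+1,q)$. By Lemma \ref{lemma2} the sets $\cV_\w$, $\w\in\F_{q^{2n+1}}\setminus\{0\}$, partition $\PG(V)\setminus(\Pi_1\cup\Pi_2)$; in particular, since $\alpha\neq 1$, the sets $\cV_1$ and $\cV_\alpha$ are disjoint. Combined with Lemma \ref{lemma1}(i), which gives $|\cV_1|=|\cV_\alpha|=(q^{2n+1}-1)/(q-1)$, this yields $|\cV_1\cup\cV_\alpha|=2(q^{2n+1}-1)/(q-1)$. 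That $\cV_1\cup\cV_\alpha$ is a cap is precisely Proposition \ref{cap1}.

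For completeness I would invoke the standard characterization that a cap $K$ in a projective space is complete exactly when every point outside $K$ lies on a secant line of $K$, i.e.\ a line meeting $K$ in two --- hence, by the cap property, exactly two --- points. Take any $R\in\PG(V)\setminus(\cV_1\cup\cV_\alpha)$. By Theorem \ref{main}, $R$ lies on a line $\ell$ through some $P\in\cV_1$ and some $P'\in\cV_\alpha$. As $\cV_1\cap\cV_\alpha=\emptyset$, the points $P$ and $P'$ are distinct, so $\ell$ is a genuine secant of $\cV_1\cup\cV_\alpha$; consequently adjoining $R$ to $\cV_1\cup\cV_\alpha$ would produce three collinear points, so $R$ cannot be added. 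Since $R$ was arbitrary, $\cV_1\cup\cV_\alpha$ is a complete cap of the stated size.

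I do not anticipate a real obstacle here, as the entire substance has been front-loaded into Propositions \ref{cap}, \ref{cap1} and Theorem \ref{main}. The only points requiring a word of care are that the line furnished by Theorem \ref{main} is a true secant rather than a tangent --- immediate from disjointness of $\cV_1$ and $\cV_\alpha$ together with the cap property --- and the bookkeeping identification $\PG(V)=\PG(4n+1,q)$.
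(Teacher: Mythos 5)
Your proposal is correct and is exactly the argument the paper intends: the paper states Theorem \ref{complete} without further proof because it follows immediately by assembling Proposition \ref{cap1} (cap property), Lemma \ref{lemma1}(i) together with the disjointness from Lemma \ref{lemma2} (size), and Theorem \ref{main} (no point outside can be adjoined). Your added remark that the line from Theorem \ref{main} is a genuine secant, since $\cV_1$ and $\cV_\alpha$ are disjoint, is the right small check and matches the paper's setup.
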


\begin{cor}
Every point of $\PG(V) \setminus (\cV_1 \cup \cV_\alpha)$ lies on exactly one line joining a point of $\cV_1$ and a point of $\cV_\alpha$. 
\end{cor}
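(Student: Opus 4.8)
The plan is to upgrade the existence statement of Theorem \ref{main} to uniqueness by a double counting argument, using only the cap property from Proposition \ref{cap1}, the sizes of $\cV_1$ and $\cV_\alpha$ from Lemma \ref{lemma1}, and the total point count of $\PG(V)$. Write $N := |\cV_1| = |\cV_\alpha| = (q^{2n+1}-1)/(q-1)$, which holds by Lemma \ref{lemma1}(i). First I would count the \emph{secant lines}, meaning the lines joining a point of $\cV_1$ to a point of $\cV_\alpha$. Since $\cV_1 \cup \cV_\alpha$ is a cap (Proposition \ref{cap1}), no such line can contain a second point of $\cV_1 \cup \cV_\alpha$; in particular each secant line meets $\cV_1$ in a single point and $\cV_\alpha$ in a single point. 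Hence the map $(P,Q) \in \cV_1 \times \cV_\alpha \mapsto \la P, Q\ra$ is a bijection onto the set of secant lines, so there are exactly $N^2$ of them.

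Next I would count incidences between secant lines and the \emph{external points}, that is, the points of $\PG(V) \setminus (\cV_1 \cup \cV_\alpha)$. Every line of $\PG(V)$ has $q+1$ points, and for a secant line exactly two of them lie in the cap, so each secant line contains precisely $q-1$ external points. Therefore the number of (external point, secant line) incidences equals $(q-1)N^2 = (q^{2n+1}-1)^2/(q-1)$. On the other hand, since $\PG(V)$ has dimension $4n+1$, the number of external points is
\[ \frac{q^{4n+2}-1}{q-1} - 2N = \frac{q^{4n+2} - 2q^{2n+1} + 1}{q-1} = \frac{(q^{2n+1}-1)^2}{q-1}, \]
which is exactly the same number.

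Finally, by Theorem \ref{main} every external point lies on at least one secant line, hence contributes at least one incidence. Since the total number of incidences equals the number of external points, each external point must lie on exactly one secant line, which is the assertion. The argument is essentially forced once the two counts agree; the only places where the cap property is genuinely needed are the injectivity of $(P,Q)\mapsto \la P,Q\ra$ and the fact that precisely $q-1$ points of each secant line are external. I therefore do not anticipate any real obstacle beyond checking that these two structural facts indeed follow from Proposition \ref{cap1} and that the arithmetic of the point counts is as displayed.
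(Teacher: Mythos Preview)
Your proposal is correct and follows essentially the same double counting argument as the paper: count the $N^2$ secant lines, note each carries $q-1$ external points, verify that $(q-1)N^2$ equals the total number of external points, and combine with Theorem~\ref{main}. You are simply more explicit than the paper about invoking the cap property (Proposition~\ref{cap1}) to justify that the map $(P,Q)\mapsto\la P,Q\ra$ is injective and that exactly two points of each secant line lie in $\cV_1\cup\cV_\alpha$.
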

\begin{proof}
Note that there are $(q-1) \left(\frac{q^{2n+1}-1}{q-1}\right)^2$ points in $\PG(V) \setminus (\cV_1 \cup \cV_\alpha)$. By Theorem \ref{main}, every point of $\PG(V) \setminus (\cV_1 \cup \cV_\alpha)$ lies on at least a line joining a point of $\cV_1$ and a point of $\cV_{\alpha}$. On the other hand there are $\left(\frac{q^{2n+1}-1}{q-1}\right)^2$ lines meeting both $\cV_1$ and $\cV_\alpha$ in one point and on each of these lines there are $q-1$ points of $\PG(V) \setminus (\cV_1 \cup \cV_\alpha)$. The claim follows.
\end{proof}

\begin{remark}\label{oss_1}
Two consequences of the previous corollary arise. 

Firstly, the {\em join} of $\cV_1$ and $\cV_\alpha$ defined by
\[
\{(x,y,z) \in \cV_1 \times \cV_{\alpha} \times \PG(4n+1, q)  : \mbox{$z$ is a point of the line joining $x$ and $y$}\}.
\]
is the whole of $\PG(4n+1, q)$. In other words, the pointsets $\cV_1$ and $\cV_{\alpha}$ behave as two disjoint $2n$-dimensional projective subspaces of $\PG(4n+1, q)$.

Secondly, if $\ell_1$ and $\ell_\alpha$ are lines of $\PG(V)$ such that $\ell_1 \cap \cV_1 = \{V_1, V_2\}$ and $\ell_\alpha \cap \cV_\alpha = \{V_3, V_4\}$, then $|\ell_1 \cap \ell_\alpha| = 0$. Otherwise the lines $V_1 V_3$ and $V_2 V_4$ would have a point in common, contradicting the previous corollary.
\end{remark}

\section{A geometric description}

Let $\tilde{W}$ be the $\F_{q^{2n+1}}$--vector space of dimension $(n+1)(2n+1)$ consisting of the $(2n+1) \times (2n+1)$ symmetric matrices over $\F_{q^{2n+1}}$. Let $W$ be the subset of vectors $M(a_0, \dots, a_n)$, $a_0, \dots, a_n \in \F_{q^{2n+1}}$, of $\tilde{W}$, where $M(a_0, \dots a_n)=(m_{ij})$,$i,j\in\{1,\ldots,2n+1\}$, 
denotes the matrix 
$$
\begin{pmatrix}
a_0 & a_1 & a_2 & \dots & a_{n-1} & a_n & a_{n}^{q^{n+1}} & a_{n-1}^{q^{n+2}} & \dots & a_3^{q^{2n-2}} & a_{2}^{q^{2n-1}} & a_{1}^{q^{2n}} \\
a_1 & a_0^q & a_1^q & \dots & a_{n-2}^q & a_{n-1}^q & a_n^q & a_n^{q^{n+2}} & \dots & a_4^{q^{2n-2}} & a_3^{q^{2n-1}} & a_2^{q^{2n}} \\
a_2 & a_1^q & a_0^{q^2} & \dots & a_{n-3}^{q^2} & a_{n-2}^{q^2} & a_{n-1}^{q^2} & a_{n}^{q^2} & \dots & a_{5}^{q^{2n-2}} & a_{4}^{q^{2n-1}} & a_{3}^{q^{2n}} \\
\vdots & \vdots & \vdots & \ddots & \vdots & \vdots & \vdots & \vdots & \ddots & \vdots & \vdots & \vdots \\
a_{n-1} & a_{n-2}^q & a_{n-3}^{q^2} & \dots & a_{0}^{q^{n-1}} & a_{1}^{q^{n-1}} & a_2^{q^{n-1}} & a_3^{q^{n-1}} & \dots & a_{n-1}^{q^{n-1}} & a_{n}^{q^{n-1}} & a_n^{q^{2n}} \\
a_n & a_{n-1}^q & a_{n-2}^{q^2} & \dots & a_{1}^{q^{n-1}} & a_0^{q^n} & a_1^{q^n} & a_2^{q^n} & \dots & a_{n-2}^{q^n} & a_{n-1}^{q^{q^n}} & a_n^{q^n} \\
a_n^{q^{n+1}} & a_n^{q} & a_{n-1}^{q^2} & \dots & a_{2}^{q^{n-1}} & a_1^{q^n} & a_0^{q^{n+1}} & a_1^{q^{n+1}} & \dots & a_{n-3}^{q^{n+1}} & a_{n-2}^{q^{n+1}} & a_{n-1}^{q^{n+1}} \\
a_{n-1}^{q^{n+2}} & a_n^{q^{n+2}} & a_{n}^{q^2} & \dots & a_3^{q^{n-1}} & a_2^{q^{n}} & a_1^{q^{n+1}} & a_0^{q^{n+2}} & \dots & a_{n-4}^{q^{n+2}} & a_{n-3}^{q^{n+2}} & a_{n-2}^{q^{n+2}} \\ 
\vdots & \vdots & \vdots & \ddots & \vdots & \vdots & \vdots & \vdots & \ddots & \vdots & \vdots & \vdots \\
a_3^{q^{2n-2}} & a_{4}^{q^{2n-2}} & a_5^{q^{2n-2}} & \dots & a_{n-1}^{q^{n-1}} & a_{n-2}^{q^{n}} & a_{n-3}^{q^{n+1}} & a_{n-4}^{q^{n+2}} & \dots & a_0^{q^{2n-2}} & a_1^{q^{2n-2}} & a_2^{q^{2n-2}} \\
a_2^{q^{2n-1}} & a_{3}^{q^{2n-1}} & a_{4}^{q^{2n-1}} & \dots & a_n^{q^{n-1}} & a_{n-1}^{q^n} & a_{n-2}^{q^{n+1}} & a_{n-3}^{q^{n+2}} & \dots & a_1^{q^{2n-2}} & a_0^{q^{2n-1}} & a_1^{q^{2n-1}} \\
a_1^{q^{2n}} & a_{2}^{q^{2n}} & a_3^{q^{2n}} & \dots & a_{n}^{q^{2n}} & a_n^{q^n} & a_{n-1}^{q^{n+1}} & a_{n-2}^{q^{n+2}} & \dots & a_2^{q^{2n-2}} & a_1^{q^{2n-1}} & a_0^{q^{2n}} \\  
\end{pmatrix}.
$$
It is easily seen that $W$ is an $\F_q$--vector space of dimension $(n+1)(2n+1)$. Denote by $\tilde{\Pi}_i$ the $2n$-dimensional subspaces of $\PG(W)$ consisting of the points $M(0, \dots, 0, a_i, 0, \dots, 0)$, $a_i \in \F_{q^{2n+1}} \setminus \{0\}$. 
Let $\eta$ be a primitive element of $\F_{q^{2n+1}}$. Consider the invertible linear maps of $W$ given by
\begin{align}
& M(a_0, a_1, \dots, a_n) \in W \longmapsto M(a_0, \alpha_1 a_1, \dots, \alpha_n a_n) \in W \label{psi1} \\
& M(a_0, a_1, \dots, a_n) \in W \longmapsto M(\eta^2 a_0, \eta^{q+1} a_1, \dots, \eta^{q^{n}+1}a_n) \in W \label{phi1}
\end{align}
and denote by $\psi_{\alpha_1, \dots, \alpha_n}$ and $\tilde{\phi}$ the projectivities of $\PG(W)$ defined by the linear maps \eqref{psi1} and  \eqref{phi1}, respectively. The projectivity $\psi_{\alpha_1, \dots, \alpha_n}$ stabilizes setwise the subspaces $\tilde{\Pi}_i$, $0 \le i \le n$.

For $\alpha_i \in \F_{q^{2n+1}} \setminus \{0\}$, $1 \le i \le n$, let $\cV_{\alpha_1, \dots, \alpha_n}$ be the set of points of $\PG(W) \simeq \PG(n(2n+3), q)$ of the form
\[
M\left(x^2, \alpha_1 x^{q+1}, \dots, \alpha_n x^{q^{n}+1}\right), x \in \F_{q^{2n+1}} \setminus \{0\}.
\]
A point of $\PG(W)$ is said to have rank $r$ if and only if the corresponding matrix $M(a_0, \dots, a_n)$ has rank $r$. The projective variety consisting of the zeros of all determinants of the submatrices of order $2$ is known as the {\em standard Veronese variety} of $\PG(W)$, see \cite[Section 4.1]{HT}. By definition $\cV_{1, \dots, 1}$ is the standard Veronese variety of $\PG(W)$. Hence $|\cV_{1, \dots, 1}| = \frac{q^{2n+1}-1}{q-1}$. Similarly, the {\em chordal variety} $\cS_{1, \dots, 1}$ of the standard Veronese variety of $\PG(W)$ is formed by the zeros of all determinants of the submatrices of order $3$, see \cite[Lecture 9]{Harris}. 

\begin{remark}\label{oss_2}
Observe that every point of $\PG(W)$ lying on a line joining two points of $\cV_{1, \dots, 1}$ is contained in $\cS_{1, \dots, 1}$. On the other hand, when $q$ is odd, every point of $\cS_{1,\ldots,1}$ is contained in a line joining two points of $\cV_{1,\ldots,1}$, see \cite[Proposition 2.16]{CLS}.
\end{remark}

\begin{lemma}\label{lemma11}
\begin{itemize}
\item[1)] $\psi_{\alpha_1, \dots, \alpha_n}$ maps $\cV_{1, \dots, 1}$ to $\cV_{\alpha_1, \dots, \alpha_n}$.
\item[2)] $\langle \tilde{\phi} \rangle$ is a group of order $\frac{q^{2n+1}-1}{q-1}$ acting regularly on points of $\cV_{\alpha_1, \dots, \alpha_n}$.
\end{itemize}
\end{lemma}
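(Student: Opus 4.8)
The plan is to follow the pattern of the proof of Lemma~\ref{lemma1}, tracking everything through the parameter $x$. Part~1) is a direct computation from the definitions: the projectivity $\psi_{\alpha_1,\dots,\alpha_n}$ is induced by \eqref{psi1}, which sends the generic point $M(x^2,x^{q+1},\dots,x^{q^n+1})$ of $\cV_{1,\dots,1}$ to $M(x^2,\alpha_1 x^{q+1},\dots,\alpha_n x^{q^n+1})$. As $x$ runs through $\F_{q^{2n+1}}\setminus\{0\}$ the latter runs through the points of $\cV_{\alpha_1,\dots,\alpha_n}$ by definition, so $\psi_{\alpha_1,\dots,\alpha_n}$ maps $\cV_{1,\dots,1}$ bijectively onto $\cV_{\alpha_1,\dots,\alpha_n}$.

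For part~2), I would first describe the action of $\tilde{\phi}$ on $\cV_{\alpha_1,\dots,\alpha_n}$: applying \eqref{phi1} to $M(x^2,\alpha_1 x^{q+1},\dots,\alpha_n x^{q^n+1})$ gives $M(\eta^2 x^2,\eta^{q+1}\alpha_1 x^{q+1},\dots,\eta^{q^n+1}\alpha_n x^{q^n+1})=M((\eta x)^2,\alpha_1(\eta x)^{q+1},\dots,\alpha_n(\eta x)^{q^n+1})$, so $\tilde{\phi}$ stabilises $\cV_{\alpha_1,\dots,\alpha_n}$ and carries the point of parameter $x$ to the point of parameter $\eta x$. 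Next I would settle when two parameters yield the same point: if $M(x^2,\alpha_1 x^{q+1},\dots)$ and $M((x')^2,\alpha_1(x')^{q+1},\dots)$ are proportional over $\F_q$, comparing the $(1,1)$- and $(1,2)$-entries forces the scalar of proportionality to equal both $(x/x')^2$ and $(x/x')^{q+1}$, hence $(x/x')^{q-1}=1$, i.e.\ $x/x'\in\F_q$; conversely, if $x/x'\in\F_q\setminus\{0\}$ then $(x/x')^{q^j}=x/x'$ for all $j$, so every entry of the first matrix is $(x/x')^2$ times the corresponding entry of the second. Thus the points of $\cV_{\alpha_1,\dots,\alpha_n}$ are in bijection with $\F_{q^{2n+1}}^*/\F_q^*$, a cyclic group of order $\tfrac{q^{2n+1}-1}{q-1}$ on which $\tilde{\phi}$ acts as multiplication by the generating coset $\eta\F_q^*$; in particular $\langle\tilde{\phi}\rangle$ acts transitively on the $\tfrac{q^{2n+1}-1}{q-1}$ points of $\cV_{\alpha_1,\dots,\alpha_n}$.

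Finally I would pin down $|\langle\tilde{\phi}\rangle|$. Put $m=\tfrac{q^{2n+1}-1}{q-1}$; then $\eta^m$ has multiplicative order $q-1$, hence lies in $\F_q$, so $\eta^{(q^k+1)m}=(\eta^m)^{q^k}\eta^m=(\eta^m)^2$ for every $k$. Therefore $\tilde{\phi}^m$ is induced by the linear map rescaling each $a_k$ by the common $\F_q$-scalar $(\eta^m)^2$, which rescales $M(a_0,\dots,a_n)$ by $(\eta^m)^2$; hence $\tilde{\phi}^m$ is the identity projectivity, and $|\langle\tilde{\phi}\rangle|$ divides $m$. Since $\langle\tilde{\phi}\rangle$ already acts transitively on the $m$ points of $\cV_{\alpha_1,\dots,\alpha_n}$, we get $|\langle\tilde{\phi}\rangle|=m$ and the action is regular. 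I do not anticipate a genuine obstacle: this is essentially the argument behind Lemma~\ref{lemma1}(ii), and the only care required is the bookkeeping with the (Frobenius-twisted) matrix entries and the identity $\eta^{(q^k+1)m}=(\eta^m)^2$, together with the observation that it already suffices to inspect the top-left $2\times 2$ block of $M(a_0,\dots,a_n)$.
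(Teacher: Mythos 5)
Your proof is correct and follows essentially the same route as the paper, which likewise argues via the action of $\langle\tilde{\phi}\rangle$ on the parameter $x$ (mirroring Lemma \ref{lemma1}(ii)): the paper computes that $\tilde{\phi}^i$ is the identity iff $\eta^i\in\F_q$ and then combines a trivial point-stabilizer with $|\langle\tilde{\phi}\rangle|=|\cV_{\alpha_1,\dots,\alpha_n}|$, while you repackage the same facts by parametrizing the points by $\F_{q^{2n+1}}^*/\F_q^*$ to get transitivity and bounding the order by showing $\tilde{\phi}^m$ is the identity. This is only a cosmetic reordering, not a different argument, and all your computations (including the comparison of the $(1,1)$- and $(1,2)$-entries and the identity $\eta^{(q^k+1)m}=(\eta^m)^2$) are sound.
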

\begin{proof}
Part {\em 1)} is straightforward.

To prove part {\em 2)}, consider the projectivity $\tilde{\phi}^i$ associated with the linear map given by $M(a_0, a_1, \dots, a_n) \in V \longmapsto M(\eta^{2i} a_0, \eta^{(q+1)i} a_1, \dots, \eta^{(q^{n}+1)i}a_n) \in V$. Therefore $\tilde{\phi}^i$ is the identity if and only if $\eta^{(q-1)i} = 1$ and hence $\eta^i \in \F_q$. Moreover $|Stab_{\langle \tilde{\phi} \rangle}(M(1, \dots, 1))| = 1$. Since $|\langle \tilde{\phi} \rangle| = |\cV_{\alpha_1, \dots, \alpha_n}|$, the result follows.
\end{proof}

\begin{lemma}
The $\left(q^{2n+1}-1\right)^n$ Veronese varieties $\cV_{\alpha_1, \dots, \alpha_n}$ form a partition of the points of $\PG(W)$ contained in none of the $(2n^2+n-1)$-dimensional subspaces $\langle \Pi_0, \dots, \Pi_{i-1}, \Pi_{i+1}, \dots, \Pi_n \rangle$, $0 \le i \le n$.
\end{lemma}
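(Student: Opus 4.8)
\noindent\emph{Sketch of the intended proof.}

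First I would pin down the subspaces involved. Write $S_i:=\langle\tilde{\Pi}_0,\dots,\tilde{\Pi}_{i-1},\tilde{\Pi}_{i+1},\dots,\tilde{\Pi}_n\rangle$. The key observation is that the assignment $(a_0,\dots,a_n)\mapsto M(a_0,\dots,a_n)$ is additive and satisfies $M(\lambda a_0,\dots,\lambda a_n)=\lambda\,M(a_0,\dots,a_n)$ for every $\lambda\in\F_q$, because each entry of $M(a_0,\dots,a_n)$ is a $q^j$-th power of one of the $a_\ell$ and $\lambda^{q}=\lambda$. Hence $M$ is an injective $\F_q$-linear map onto $W$, and $S_i$ is exactly the set of points $[M(a_0,\dots,a_n)]$ with $a_i=0$; in particular $\dim S_i=n(2n+1)-1=2n^2+n-1$, and a point of $\PG(W)$ lies in none of $S_0,\dots,S_n$ precisely when it equals $[M(a_0,\dots,a_n)]$ for some $(n+1)$-tuple with all $a_i\neq 0$. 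Call this set of points $\cU$ (note that ``all $a_i\neq 0$'' is invariant under scaling the tuple by $\F_q\setminus\{0\}$, so it is well defined on points). Since each point of $\cV_{\alpha_1,\dots,\alpha_n}$ has the form $[M(x^2,\alpha_1x^{q+1},\dots,\alpha_nx^{q^n+1})]$ with $x\neq0$ and every $\alpha_i\neq0$, we get $\cV_{\alpha_1,\dots,\alpha_n}\subseteq\cU$ for free, so only covering and pairwise disjointness remain.

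For covering, take $P=[M(a_0,\dots,a_n)]\in\cU$. First I would choose $c\in\F_q\setminus\{0\}$ such that $c\,a_0$ is a nonzero square of $\F_{q^{2n+1}}$: any $c$ works when $q$ is even, and when $q$ is odd and $a_0$ is a non-square it suffices to take $c$ a non-square of $\F_q$, which is then a non-square of $\F_{q^{2n+1}}$ since $(q^{2n+1}-1)/(q-1)=1+q+\dots+q^{2n}$ is odd — the same mechanism already used in the proof of Lemma~\ref{lemma2}. Writing $c\,a_0=x^2$ with $x\in\F_{q^{2n+1}}\setminus\{0\}$ and setting $\alpha_i:=c\,a_i\,x^{-(q^i+1)}\in\F_{q^{2n+1}}\setminus\{0\}$ for $1\le i\le n$, the $\F_q$-linearity of $M$ gives $c\,M(a_0,\dots,a_n)=M(x^2,\alpha_1x^{q+1},\dots,\alpha_nx^{q^n+1})$, hence $P\in\cV_{\alpha_1,\dots,\alpha_n}$.

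For disjointness (which simultaneously shows the listed varieties are pairwise distinct, and therefore that there are exactly $(q^{2n+1}-1)^n$ of them), suppose a point lies on both $\cV_{\alpha_1,\dots,\alpha_n}$ and $\cV_{\beta_1,\dots,\beta_n}$. Then there are $x,y\in\F_{q^{2n+1}}\setminus\{0\}$ and $c\in\F_q\setminus\{0\}$ with $M(x^2,\alpha_1x^{q+1},\dots,\alpha_nx^{q^n+1})=c\,M(y^2,\beta_1y^{q+1},\dots,\beta_ny^{q^n+1})$; reading off the first $n+1$ entries of the first row yields $x^2=c\,y^2$ and $\alpha_ix^{q^i+1}=c\,\beta_iy^{q^i+1}$ for $1\le i\le n$. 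Put $z:=x/y$, so that $z^2=c\in\F_q\setminus\{0\}$; exactly as in Lemma~\ref{lemma2} this forces $z\in\F_q$ (for $q$ odd, $(z^{q-1})^2=1$ while $\N_{q^{2n+1}/q}(-1)=-1\neq1$ rules out $z^{q-1}=-1$; for $q$ even, $z^2\in\F_q$ already gives $z\in\F_q$). Then $z^{q^i}=z$ for all $i$, and substituting back into $\alpha_i z^{q^i+1}=z^2\beta_i$ gives $\alpha_i=\beta_i z^{1-q^i}=\beta_i$ for every $i$. Combining this with the covering step, $\cU$ is the disjoint union of the sets $\cV_{\alpha_1,\dots,\alpha_n}$ indexed by $(\alpha_1,\dots,\alpha_n)\in(\F_{q^{2n+1}}\setminus\{0\})^n$, which is the asserted partition; as a numerical check, $|\cU|=(q^{2n+1}-1)^{n+1}/(q-1)$ and each $|\cV_{\alpha_1,\dots,\alpha_n}|=(q^{2n+1}-1)/(q-1)$, whose quotient is $(q^{2n+1}-1)^n$.

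The only delicate point I expect is the normalization making $c\,a_0$ a square in the covering step, together with its mirror image $z^2\in\F_q\setminus\{0\}\Rightarrow z\in\F_q$ in the disjointness step; both rest on the parity of $1+q+\dots+q^{2n}$ and are already essentially contained in Lemma~\ref{lemma2}. Everything else reduces to routine bookkeeping with the $\F_q$-linearity of $a\mapsto M(a)$ and with the Veronese parametrization.
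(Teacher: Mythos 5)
Your proof is correct, but it is organized quite differently from the paper's. The paper leans on the machinery already set up in Lemma~\ref{lemma11}: each $\cV_{\alpha_1,\dots,\alpha_n}$ is a single orbit of the cyclic group $\langle\tilde\phi\rangle$, so two such varieties are either equal or disjoint; via the projectivities $\psi_{\alpha_1,\dots,\alpha_n}$ the distinctness question is reduced to the single verification that $M(1,\dots,1)\notin\cV_{\alpha_1,\dots,\alpha_n}$ when $(\alpha_1,\dots,\alpha_n)\neq(1,\dots,1)$ (done by comparing entries $m_{1,i}$ and $m_{i,i}$, which yields $\alpha_i^2=1$ and $\N_{q^{2n+1}/q}(\alpha_i)=1$, hence $\alpha_i=1$); covering is then obtained purely by counting, since the $(q^{2n+1}-1)^n$ pairwise disjoint varieties of size $(q^{2n+1}-1)/(q-1)$ exactly exhaust the $(q^{2n+1}-1)^{n+1}/(q-1)$ points lying in none of the subspaces $\langle\tilde\Pi_0,\dots,\tilde\Pi_{i-1},\tilde\Pi_{i+1},\dots,\tilde\Pi_n\rangle$. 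You instead avoid both the group action and the counting: you prove covering constructively, by choosing $c\in\F_q\setminus\{0\}$ so that $ca_0$ is a square (using that $1+q+\dots+q^{2n}$ is odd, so a non-square of $\F_q$ stays a non-square of $\F_{q^{2n+1}}$) and exhibiting explicitly the parameters $\alpha_i=ca_ix^{-(q^i+1)}$ of the variety through a given point; and you prove pairwise disjointness by comparing the first $n+1$ entries of the first row and running the same $\N_{q^{2n+1}/q}(-1)=-1$ parity argument as in Lemma~\ref{lemma2} (with the easy char-$2$ variant) to force $x/y\in\F_q$ and hence equality of the parameter tuples. Both arguments are sound; yours has the advantage of telling you exactly which variety contains a given point (and makes the partition independent of the cardinality bookkeeping), while the paper's is shorter given that the regular action and the size of each variety are already on record.
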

\begin{proof}
It is enough to show that if $(\alpha_1, \dots, \alpha_n) \ne (1, \dots, 1)$, then $M(1, \dots, 1) \notin \cV_{\alpha_1, \dots, \alpha_n}$. Indeed, the points of $\PG(W)$ contained in none of the  $(2n^2+n-1)$-dimensional subspaces $\langle \Pi_0, \dots, \Pi_{i-1}, \Pi_{i+1}, \dots, \Pi_n \rangle$, $0 \le i \le n$ are $\frac{\left(q^{2n+1}-1\right)}{q-1}^{n+1}$ in numbers. Moreover, if $\cV_{\alpha_1', \dots, \alpha_n'} = \cV_{\beta_1, \dots, \beta_n}$ with $(\alpha_1', \dots, \alpha_n') \ne (\beta_1, \dots, \beta_n)$, then by applying $\psi_{\alpha_1', \dots, \alpha_n'}^{-1}$, we find $\cV_{\alpha_1, \dots, \alpha_n} = \cV_{1, \dots, 1}$, with $(\alpha_1, \dots, \alpha_n) \ne (1, \dots, 1)$. Since $\langle \phi \rangle$ acts regularly on $\cV_{\alpha_1, \dots, \alpha_n}$, then $\cV_{\alpha_1, \dots, \alpha_n} = \cV_{1, \dots, 1}$ if and only if there is a point of $\cV_{1,\ldots,1}$ lying on $\cV_{\alpha_1, \dots, \alpha_n}$.

Suppose by contradiction that $M(1, \dots, 1)$ is a point of $\cV_{\alpha_1, \dots, \alpha_n}$ and $(\alpha_1, \dots, \alpha_n) \ne (1, \dots, 1)$. Then there exists $x \in \F_{q^{2n+1}} \setminus \{0\}$ such that $M(1,\dots,1) = M \left(x^2, \alpha_1 x^{q+1}, \alpha_2 x^{q^2+1}, \dots, \alpha_n x^{q^{n}+1} \right)$. By comparing the $2n$ elements $m_{1,i}$ and $m_{i,i}$, with $2 \le i \le n+1$, we get $x^{q^i-1} = \frac{1}{\alpha_i}, x^{2(q^i-1)} = 1$, for $1 \le i \le n$. This implies that $\N_{q^{2n+1}/q}(\alpha_i)=1$ and $\alpha_i^2=1$, i.e., $\alpha_i=1$, for $1 \le i \le n$, a contradiction.
\end{proof}

Let $\tilde{V}$ be the $(4n+2)$--dimensional $\F_q$--vector space underlying the projective space spanned by $\tilde{\Pi}_0, \tilde{\Pi}_1$ and let $\tilde{\cV}_{\w}$ be the set of points of $\PG(\tilde{V})$ obtained by projecting $\cV_{\w, \alpha_2, \dots, \alpha_n}$ from $\langle \tilde{\Pi}_2, \dots, \tilde{\Pi}_{n} \rangle$ onto $\PG(\tilde{V})$ when $n \geq 2$, and simply put $\tilde{\cV}_{\w}=\cV_{\w}$ when $n=1$. With the notation used in Section \ref{construction}, it is clear that $\tilde{V} \simeq V$ as $\F_q$-vector spaces. Moreover, the following result holds true.


\begin{prop}
	\label{projection}
	$\tilde{\cV}_{\w}$ and $\cV_{\w}$ are projectively equivalent.
\end{prop}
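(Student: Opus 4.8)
Since $\tilde{\cV}_\w$ is \emph{defined} to equal $\cV_\w$ when $n=1$, the statement is trivial in that case, so assume $n\geq 2$; the plan is to make the projection explicit and then exhibit a concrete projectivity $\PG(\tilde V)\to\PG(V)$ carrying $\tilde{\cV}_\w$ onto $\cV_\w$.

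First I would render the projection in coordinates. The assignment $M(a_0,\dots,a_n)\mapsto(a_0,\dots,a_n)$ is a well-defined $\F_q$-linear map $W\to\F_{q^{2n+1}}^{n+1}$ (just read off the first row of the big matrix), and it is bijective since $M(\cdot)$ is $\F_q$-linear with these same values as a partial inverse and the $\F_q$-dimensions agree. Under this identification the underlying space of $\langle\tilde{\Pi}_0,\tilde{\Pi}_1\rangle=\PG(\tilde V)$ corresponds to $\F_{q^{2n+1}}^2\times\{0\}$ and that of the centre $\langle\tilde{\Pi}_2,\dots,\tilde{\Pi}_n\rangle$ to $\{0\}\times\F_{q^{2n+1}}^{n-1}$; these are complementary (the dimensions also check out: $(4n+1)+(2n^2-n-2)+1=2n^2+3n=\dim\PG(W)$), so the projection onto $\PG(\tilde V)$ is the well-defined surjection induced, again by $\F_q$-linearity, by $M(a_0,a_1,a_2,\dots,a_n)=M(a_0,a_1,0,\dots,0)+M(0,0,a_2,\dots,a_n)\mapsto M(a_0,a_1,0,\dots,0)$. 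Every point of $\cV_{\w,\alpha_2,\dots,\alpha_n}$ has first coordinate $a_0=x^2\neq 0$, hence avoids the centre, and is sent to $\langle M(x^2,\w x^{q+1},0,\dots,0)\rangle$. Thus
\[
\tilde{\cV}_\w=\bigl\{\,\langle M(x^2,\w x^{q+1},0,\dots,0)\rangle : x\in\F_{q^{2n+1}}\setminus\{0\}\,\bigr\}.
\]

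Next I would identify $\PG(\tilde V)$ with the $\PG(V)$ of Section~\ref{construction}. Setting $a_2=\dots=a_n=0$ in the displayed matrix shows that $M(a_0,a_1,0,\dots,0)$ is the symmetric $(2n+1)\times(2n+1)$ matrix carrying $a_0^{q^{i-1}}$ in position $(i,i)$, $a_1^{q^{i-1}}$ in positions $(i,i+1)$ and $(i+1,i)$ for $1\leq i\leq 2n$, $a_1^{q^{2n}}$ in positions $(1,2n+1)$ and $(2n+1,1)$, and $0$ elsewhere; hence $(a_0,a_1)\mapsto M(a_0,a_1,0,\dots,0)$ is an $\F_q$-linear bijection $\F_{q^{2n+1}}^2\to\tilde V$ (injectivity is visible in positions $(1,1)$ and $(1,2)$, and the $\F_q$-dimensions, both $4n+2$, agree). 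Composing its inverse with the $\F_q$-linear bijection $(a_0,a_1)\mapsto v(a_0,a_1)$ of $\F_{q^{2n+1}}^2$ onto $V$ gives an $\F_q$-linear isomorphism $\tilde V\to V$ sending $M(a_0,a_1,0,\dots,0)$ to $v(a_0,a_1)$, hence a projectivity $\PG(\tilde V)\to\PG(V)$. This projectivity sends $\langle M(x^2,\w x^{q+1},0,\dots,0)\rangle$ to $P(x^2,\w x^{q+1})$, and therefore maps $\tilde{\cV}_\w$ onto $\cV_\w$, which is the assertion.

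Everything in the two steps above is formal once one grants the $\F_q$-linearity of $M(\cdot)$ and the complementarity of the two subspaces; the one point needing genuine care is reading the precise shape of $M(a_0,a_1,0,\dots,0)$ off the large matrix, i.e.\ verifying that each surviving entry is a $q$-power of $a_0$ on the diagonal or of $a_1$ on the two off-diagonal bands together with the wrap-around corners $(1,2n+1),(2n+1,1)$ — which is exactly the pattern making these matrices match the vectors $v(a_0,a_1)$. I expect this bookkeeping with the big matrix to be the main (and essentially only) obstacle.
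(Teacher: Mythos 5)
Your proof is correct and follows essentially the same route as the paper: the paper's proof simply asserts that the $\F_q$-linear assignment $M\left(x^2,\w x^{q+1},\alpha_2 x^{q^2+1},\dots,\alpha_n x^{q^n+1}\right)\mapsto v\left(x^2,\w x^{q+1}\right)$ is non-singular and induces the desired bijection, which is exactly the composition (projection onto $\PG(\tilde V)$ followed by the identification $M(a_0,a_1,0,\dots,0)\mapsto v(a_0,a_1)$) that you spell out. Your version just makes the bookkeeping explicit (complementarity of $\tilde V$ and the centre, the variety avoiding the centre, linearity of $M(\cdot)$), so no changes are needed.
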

\begin{proof}
The $\F_q$-linear map
	\[
	M\left(x^2, \w x^{q+1}, \alpha_2 x^{q^2+1}, \dots, \alpha_n x^{q^{n}+1}\right) \mapsto v(x^2, \w x^{q+1})
	\] 
	is non-singular and induces a bijection between the points of $\tilde{\cV}_{\w}$ and the points of $\cV_{\w}$. 
\end{proof}

Let $\cS_{\alpha_1, \dots, \alpha_n}$ be the chordal variety of $\cV_{\alpha_1, \dots, \alpha_n}$, that is, $\cS_{\alpha_1, \dots, \alpha_n}$ is the image of $\cS_{1, \dots, 1}$ under ${\psi_{\alpha_1, \dots, \alpha_n}}$. 

\begin{remark}\label{chords0}
Since $\cS_{1, \dots, 1}$ is the set of zeros of the determinants of all $3 \times 3$ submatrices of $M(a_0, a_1, \dots, a_n)$ and $\cS_{\alpha, 1, \dots, 1}$ is obtained by substituting $a_1$ with $\alpha a_1$, it turns out that $\cS_{\alpha, 1, \dots, 1}$ is the set of zeros of the determinants of all $3 \times 3$ submatrices of $M(a_0, \alpha^{-1} a_1, \dots, a_n)$.
\end{remark}

From Remark \ref{oss_1} and Remark \ref{oss_2}, the following result can be deduced.

\begin{cor}
Let $q$ be odd. Then $\cS_{1, \alpha_2 \dots, \alpha_n}$ and $\cS_{-1, \beta_2, \dots, \beta_n}$ are disjoint.
\end{cor}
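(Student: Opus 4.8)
The plan is to transfer the statement, via the projection already introduced in this section, to the configuration of $\cV_1$ and $\cV_\alpha$ inside $\PG(V)=\PG(4n+1,q)$, and then to quote the last paragraph of Remark~\ref{oss_1}. Recall that $\alpha=-1$ here, since $q$ is odd. Put $C:=\langle\tilde\Pi_2,\dots,\tilde\Pi_n\rangle$ (the empty subspace when $n=1$), so that $C$ and $\PG(\tilde V)$ are complementary in $\PG(W)$ and the projection $\pi$ from $C$ onto $\PG(\tilde V)\simeq\PG(V)$ is defined off $C$; by Proposition~\ref{projection} the \emph{same} identification $\PG(\tilde V)\simeq\PG(V)$ carries $\pi(\cV_{\w,\alpha_2,\dots,\alpha_n})$ onto $\cV_\w$ for every $\w$, in particular onto $\cV_1$ (for $\w=1$) and onto $\cV_{-1}=\cV_\alpha$ (for $\w=-1$). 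Assuming for contradiction that some point $P$ lies in $\cS_{1,\alpha_2,\dots,\alpha_n}\cap\cS_{-1,\beta_2,\dots,\beta_n}$, I would show that $\pi(P)$ lies at the same time on a secant of $\cV_1$ and on a secant of $\cV_\alpha$, which Remark~\ref{oss_1} forbids.

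First I would reduce to honest secant lines. By Remark~\ref{oss_2} and $q$ odd, every point of $\cS_{1,\dots,1}$ lies on a line through two \emph{distinct} points of $\cV_{1,\dots,1}$; applying the projectivities $\psi_{1,\alpha_2,\dots,\alpha_n}$ and $\psi_{-1,\beta_2,\dots,\beta_n}$, which by Lemma~\ref{lemma11} and the definition of $\cS_{\alpha_1,\dots,\alpha_n}$ send $\cV_{1,\dots,1},\cS_{1,\dots,1}$ to $\cV_{1,\alpha_2,\dots,\alpha_n},\cS_{1,\alpha_2,\dots,\alpha_n}$ and to $\cV_{-1,\beta_2,\dots,\beta_n},\cS_{-1,\beta_2,\dots,\beta_n}$ respectively, the same property transfers to these Veronese varieties. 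So $P$ lies on a secant $\ell=\langle A_1,A_2\rangle$ of $\cV_{1,\alpha_2,\dots,\alpha_n}$ with $A_1\ne A_2$, and on a secant $\ell'=\langle B_1,B_2\rangle$ of $\cV_{-1,\beta_2,\dots,\beta_n}$ with $B_1\ne B_2$. Since a point $M(x^2,\w x^{q+1},\dots)$ with $x\ne 0$ has nonzero first coordinate, $\cV_{\w,\alpha_2,\dots,\alpha_n}\cap C=\emptyset$; hence $\pi$ is defined on $A_1,A_2,B_1,B_2$ and, being injective on each Veronese variety by Proposition~\ref{projection}, it satisfies $\pi(A_1)\ne\pi(A_2)$ and $\pi(B_1)\ne\pi(B_2)$. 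As $A_1,A_2\notin C$ the line $\ell$ is not contained in $C$; if $\ell$ met $C$ in a point $Q$, then $\pi$ would be constant on $\ell\setminus\{Q\}$, which contains both $A_1$ and $A_2$, contradicting $\pi(A_1)\ne\pi(A_2)$. Therefore $\ell\cap C=\emptyset$, so $P\notin C$, $\pi(P)$ is defined, and $\pi(\ell)=\langle\pi(A_1),\pi(A_2)\rangle$ is a line meeting $\cV_1$ in exactly the two points $\pi(A_1),\pi(A_2)$ (at most two because $\cV_1$ is a cap by Proposition~\ref{cap}). The identical argument for $\ell'$ shows $\pi(P)$ also lies on a line meeting $\cV_\alpha$ in exactly two points.

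Thus $\pi(P)$ is a point of $\PG(V)$ lying simultaneously on a secant of $\cV_1$ and on a secant of $\cV_\alpha$, contradicting the last paragraph of Remark~\ref{oss_1}; this proves $\cS_{1,\alpha_2,\dots,\alpha_n}\cap\cS_{-1,\beta_2,\dots,\beta_n}=\emptyset$. The delicate point — and the reason the hypothesis ``$q$ odd'' is needed — is that $\cS_{1,\dots,1}$ is merely the zero locus of the $3\times 3$ minors, so a priori its points need not sit on honest secant lines of $\cV_{1,\dots,1}$ and $\cS_{1,\dots,1}$ could even meet the centre $C$; Remark~\ref{oss_2} (that is, \cite[Proposition~2.16]{CLS}) disposes of the first issue, after which the injectivity of $\pi$ on the Veronese varieties disposes of the second. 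Everything else is a routine use of the projection together with Remark~\ref{oss_1}.
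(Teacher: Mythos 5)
Your proof is correct and takes essentially the same route the paper intends: the paper leaves the deduction implicit (``From Remark~\ref{oss_1} and Remark~\ref{oss_2} \dots''), and your step of placing a common point on chords of the two Veronese varieties (Remark~\ref{oss_2}, $q$ odd) and projecting them from $\langle \tilde{\Pi}_2, \dots, \tilde{\Pi}_n \rangle$ onto $\PG(\tilde{V}) \simeq \PG(V)$ to contradict Remark~\ref{oss_1} is exactly the missing link, with Proposition~\ref{projection} used as in the paper. The only detail you leave implicit --- injectivity of the projection on each $\cV_{\w, \alpha_2, \dots, \alpha_n}$ --- is immediate from the parametrization, so nothing essential is missing.
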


\underline{From now on we will assume $n = 1$.} 

In this case the same geometric setting has been used also in \cite{MarinoPepe,subspacecode} for $q$ odd. Note that if $q$ is even then the chordal variety of a Veronese variety $\cV$ contains a plane disjoint from $\cV$, called the {\em nuclear plane}, see for instance \cite{HT}. If $q$ is odd, in the partition above there are two Veronese varieties whose chordal varieties are disjoint. A further proof of this fact will be provided below. If $q$ is even we show that there are two Veronese varieties having the same nuclear plane such that their chordal varieties intersect exactly in the points of this plane. 

\begin{theorem}\label{chords}
Let $\alpha$ be $-1$ if $q$ is odd or an element of $\F_q \setminus \{0, 1\}$ if $q > 2$ is even. Then $\cS_{1}$ and $\cS_{\alpha}$ are disjoint if $q$ is odd or they meet only in the points of the common nuclear plane of $\cV_1$ and $\cV_{\alpha}$ if $q$ is even.
\end{theorem}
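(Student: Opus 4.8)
The plan is to exploit that, for $n=1$, $\PG(W)$ is a $\PG(5,q)$, that $\cV_1$ is the standard Veronese surface formed by the points $M(x^2,x^{q+1})$, $x\in\F_{q^3}^*$, and that $\cS_1$ is simply the cubic hypersurface $\{M(a_0,a_1):\det M(a_0,a_1)=0\}$ --- the only $3\times 3$ submatrix of $M(a_0,a_1)$ being $M(a_0,a_1)$ itself. By Remark \ref{chords0} one also has $\cS_\alpha=\{M(a_0,a_1):\det M(a_0,\alpha^{-1}a_1)=0\}$. The first step is therefore to expand the $3\times 3$ determinant: writing $\N=\N_{q^{3}/q}$, $\Tr=\Tr_{q^{3}/q}$, the three mixed terms combine into a single trace and one gets the cubic form
\[
\det M(a_0,a_1)=\N(a_0)+2\,\N(a_1)-\Tr\!\left(a_0 a_1^{2q}\right).
\]

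Next I would set $X=\N(a_0)$, $Y=\N(a_1)$, $Z=\Tr(a_0 a_1^{2q})\in\F_q$, so that $M(a_0,a_1)\in\cS_1\cap\cS_\alpha$ if and only if $X+2Y-Z=0$ and, after clearing $\alpha^{-1}$, $\alpha^{3}X+2Y-\alpha Z=0$. Subtracting and dividing by $\alpha-1\neq 0$ yields $Z=(\alpha^{2}+\alpha+1)X$, hence $2Y=\alpha(\alpha+1)X$. Splitting on the parity of $q$: if $q$ is odd then $\alpha=-1$, so $Z=X$ and $2Y=0$, which forces $Y=0$, hence $a_1=0$, hence $Z=0=X$, hence $a_0=0$; thus $\cS_1\cap\cS_{-1}=\emptyset$, recovering for $n=1$ the disjointness stated in the Corollary above. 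If $q$ is even then $2=0$ and the system becomes $X=Z$ and $\alpha^{2}X=Z$, so $(\alpha^{2}+1)X=0$; since $\alpha\neq 1$ forces $\alpha^{2}\neq 1$ in characteristic $2$, we get $X=Z=0$, i.e. $\N(a_0)=0$, i.e. $a_0=0$. Hence $\cS_1\cap\cS_\alpha\subseteq\tilde\Pi_1=\{M(0,a_1):a_1\in\F_{q^{3}}^*\}$; conversely $\det M(0,a_1)=\det M(0,\alpha^{-1}a_1)=0$, so $\cS_1\cap\cS_\alpha=\tilde\Pi_1$.

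The last step is to identify $\tilde\Pi_1$ with the common nuclear plane of $\cV_1$ and $\cV_\alpha$. As $M(a_0,a_1)$ has diagonal $(a_0,a_0^{q},a_0^{q^{2}})$, the plane $\tilde\Pi_1$ is exactly the set of symmetric $3\times 3$ matrices with zero diagonal, i.e. the alternating matrices in characteristic $2$, and this is the nuclear plane of the Veronese surface $\cV_1$: a conic section of $\cV_1$ is the Veronese image of a line $\langle v_0,v_1\rangle$ of the source plane, whose nucleus is $[\,v_0v_1^{\mathrm T}+v_1v_0^{\mathrm T}\,]$ --- a zero-diagonal matrix --- and as the line varies these nuclei span $\tilde\Pi_1$; see also \cite[Section 4.1]{HT}. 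Since $\psi_\alpha$ is a projectivity with $\psi_\alpha(\cV_1)=\cV_\alpha$ that stabilises $\tilde\Pi_1$ setwise, it maps the nuclear plane of $\cV_1$ onto that of $\cV_\alpha$; hence $\tilde\Pi_1$ is also the nuclear plane of $\cV_\alpha$, and $\cS_1\cap\cS_\alpha$ is precisely the common nuclear plane, as claimed.

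The determinant identity and the subsequent linear algebra are routine; the one genuinely delicate point is the final step --- establishing that $\tilde\Pi_1$ is the \emph{nuclear} plane, and not merely some plane lying in $\cS_1$ and meeting $\cV_1$ trivially. The alternating-matrix description combined with the $\psi_\alpha$-invariance handles this cleanly; if one prefers to avoid the structure theory of the Veronese surface in even characteristic, one can instead compute directly that the nuclei of the conic sections of $\cV_1$ sweep out $\tilde\Pi_1$.
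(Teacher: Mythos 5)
Your proposal is correct and follows essentially the same route as the paper: via Remark \ref{chords0} both chordal varieties become single determinantal cubics (for $n=1$ the only order-$3$ submatrix is the matrix itself), and eliminating between the two equations forces $\N_{q^3/q}(a_1)=\N_{q^3/q}(a_0)=0$ when $q$ is odd and $\N_{q^3/q}(a_0)=0$ when $q$ is even, exactly as in the paper's proof. The only additions on your side are that you also verify the reverse inclusion $\tilde{\Pi}_1\subseteq\cS_1\cap\cS_\alpha$ and give a self-contained identification of $\tilde{\Pi}_1$ with the common nuclear plane (nuclei of conic sections are the zero-diagonal points $[v_0v_1^{\mathrm T}+v_1v_0^{\mathrm T}]$, transported to $\cV_\alpha$ by the $\tilde{\Pi}_1$-stabilizing projectivity $\psi_\alpha$), whereas the paper simply cites \cite[Theorem 4.23]{HT} and \cite[Section 2.3]{MarinoPepe} for this last point.
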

\begin{proof}

Assume by contradiction that $\cS_{1} \cap \cS_{\alpha}$ is not empty. Thus, by Remark \ref{chords0}, there is a matrix $M(a_0, a_1)$ such that $\rk M(a_0, a_1) \le 2$ and $\rk M(a_0, \alpha^{-1} a_1) \le 2$. Hence 
\begin{align*}
    & \N_{q^3/q} (a_0) + 2 \N_{q^3/q} (a_1) - a_0 a_1^{2q} - a_0^q a_1^{2q^2} - a_0^{q^2} a_1^{2} = 0, \\
    & \N_{q^3/q} (a_0) + 2 \alpha^{-3} \N_{q^3/q} (a_1) - \alpha^{-2} \left(a_0 a_1^{2q} + a_0^q a_1^{2q^2} + a_0^{q^2} a_1^{2}\right) = 0.
\end{align*}
Therefore we get $\N_{q^3/q}(a_1) = \N_{q^3/q}(a_0) = 0$ if $q$ is odd, whereas $\N_{q^3/q}(a_0)=0$ if $q$ is even. If $q$ is odd, the two varieties $\cS_1$ and $\cS_{-1}$ are disjoint. If $q$ is even the two varieties $\cS_1$ and $\cS_{\alpha}$ meet in points of the form $M(0,a_1)$, which belong to the nuclear plane of $\cV_1$ and $\cV_{\alpha}$, see \cite[Theorem 4.23]{HT} and \cite[Section 2.3]{MarinoPepe}.
\end{proof}

By Theorem \ref{chords} and Proposition \ref{projection},  $\cV_1$ and $\cV_{\alpha}$ (where $\alpha$ is $-1$ if $q$ is odd or an element $\F_q \setminus \{0, 1\}$ if $q > 2$ is even) are Veronese varieties of $\PG(V) \simeq \PG(5, q)$ such that their chordal varieties are disjoint or they meet only in the points of the common nuclear plane of $\cV_1$ and $\cV_{\alpha}$. This is enough to provide alternative proofs for Proposition \ref{cap1} and Theorem \ref{complete}.    

\begin{theorem}
Let $\cV$, $\cV'$ be two Veronese varieties of $\PG(5, q)$ such that their chordal varieties are disjoint if $q$ is odd or they meet only in the points of the common nuclear plane of $\cV_1$ and $\cV_{\alpha}$ if $q$ is even. Then $\cV \cup \cV'$ is a complete cap of $\PG(5, q)$.
\end{theorem}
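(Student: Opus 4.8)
The statement asks for two things — that $\cV\cup\cV'$ be a cap and that it be complete — and my plan is to derive both from the hypothesis on the chordal varieties together with a few standard incidence properties of quadratic Veronese surfaces, thereby exhibiting that the only features of the special pair $(\cV_1,\cV_\alpha)$ used in Proposition~\ref{cap1}, Theorem~\ref{main} and the corollary to Theorem~\ref{complete} are those encoded in the chordal-variety hypothesis. Write $\cS,\cS'$ for the chordal varieties of $\cV,\cV'$; thus $\cS\cap\cS'=\emptyset$ for $q$ odd, and for $q$ even $\cS\cap\cS'$ equals the common nuclear plane, which I call $\cN$, of $\cV$ and $\cV'$. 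I shall use freely: (a) every Veronese surface of $\PG(5,q)$ is a cap and has $\frac{q^{3}-1}{q-1}=q^{2}+q+1$ points (Proposition~\ref{cap} together with the projective equivalence of all such surfaces, or \cite{HT}); (b) a Veronese surface is contained in its chordal variety, and every point lying on a chord — a line joining two of its points — lies in its chordal variety (Remark~\ref{oss_2}); (c) for $q$ even, a Veronese surface and each of its chords are disjoint from its nuclear plane \cite{HT}. From (b) one gets $\cV\cap\cV'\subseteq\cS\cap\cS'$, which is empty for $q$ odd and, for $q$ even, lies in $\cN$ and is therefore empty since $\cV$ misses $\cN$; hence $\cV\cap\cV'=\emptyset$ in all cases.

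The first step I would isolate is an immediate consequence of the hypothesis: \emph{every chord of $\cV$ is disjoint from every chord of $\cV'$}. A common point would lie in $\cS\cap\cS'$ by (b); this is empty for $q$ odd, and for $q$ even it is $\cN$, contradicting (c) since the point would lie on a chord of $\cV$. The cap property follows at once: three collinear points of $\cV\cup\cV'$ cannot all lie in $\cV$ nor all in $\cV'$ by (a), so, after possibly exchanging the two surfaces, two of them $P_{1},P_{2}$ lie in $\cV$ and $P_{3}\in\cV'$; then $P_{3}$ lies on the chord $\overline{P_{1}P_{2}}$ of $\cV$, so $P_{3}\in\cS$ by (b), while $P_{3}\in\cV'\subseteq\cS'$, forcing $P_{3}\in\cS\cap\cS'$ — impossible for $q$ odd, and for $q$ even placing $P_{3}$ in $\cN$, against $\cV'\cap\cN=\emptyset$.

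For completeness I would count the lines meeting both $\cV$ and $\cV'$. Each such line meets $\cV$ and $\cV'$ in exactly one point apiece: a second point of $\cV$ on it would put its $\cV'$-point on a chord of $\cV$, hence, as above, in $\cS\cap\cS'$, which is excluded. Since $\cV\cap\cV'=\emptyset$, sending a line meeting both to the ordered pair of its two intersection points is a bijection onto $\cV\times\cV'$, so there are exactly $\left(\frac{q^{3}-1}{q-1}\right)^{2}$ of them, each carrying exactly $q-1$ points outside $\cV\cup\cV'$. Moreover two distinct lines $\overline{AA'}$, $\overline{BB'}$ of this kind (with $A,B\in\cV$ and $A',B'\in\cV'$) cannot meet at a point outside $\cV\cup\cV'$: at such a meeting point one has $A\neq B$ and $A'\neq B'$ (otherwise the two lines would share that point and a surface point, hence coincide), so the four points $A,B,A',B'$ are coplanar, and the chord $\overline{AB}$ of $\cV$ and the chord $\overline{A'B'}$ of $\cV'$ — distinct, else a point of $\cV'$ would lie on a chord of $\cV$ — are two coplanar lines and therefore meet, contradicting the first step. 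Consequently the $q-1$ exterior points on each of these $\left(\frac{q^{3}-1}{q-1}\right)^{2}$ lines are distinct across the lines, giving $(q-1)\left(\frac{q^{3}-1}{q-1}\right)^{2}$ exterior points, which is exactly $\frac{q^{6}-1}{q-1}-2\cdot\frac{q^{3}-1}{q-1}$, the number of points of $\PG(5,q)$ outside $\cV\cup\cV'$. Hence every point outside $\cV\cup\cV'$ lies on a line meeting both $\cV$ and $\cV'$, in particular on a secant of $\cV\cup\cV'$; together with the cap property this proves that $\cV\cup\cV'$ is a complete cap, and also recovers the corollary to Theorem~\ref{complete}.

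The genuinely non-routine point is the last one — realising that two joining lines meeting off $\cV\cup\cV'$ would force their four endpoints into a common plane, in which the two Veronese chords are then bound to meet, in conflict with chordal-disjointness — since everything else is either a substitution into the hypothesis or the cardinality bookkeeping; for $q$ even the only delicate matter is having the facts about the nuclear plane ($\cS\cap\cS'=\cN$, and that $\cV$, $\cV'$ and all their chords avoid $\cN$) available, which is classical \cite{HT}. A less self-contained alternative would be to invoke the transitivity of $\PGL(6,q)$ on Veronese surfaces of $\PG(5,q)$ and show that the hypothesis determines the ordered pair $(\cV,\cV')$ up to a projectivity, reducing the claim to Theorem~\ref{complete}; but this requires a transitivity statement for pairs of Veronese surfaces with disjoint chordal varieties, which the counting argument sidesteps.
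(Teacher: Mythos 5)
Your proposal is correct and follows essentially the same route as the paper: the cap property comes from noting that a point of one surface on a chord of the other would lie in both chordal varieties (handled for $q$ even via the nuclear plane and the fact that chords avoid it), and completeness comes from the same counting of the $(q^2+q+1)^2$ joining lines, with the coplanarity argument showing two such lines meeting off $\cV\cup\cV'$ would force a chord of $\cV$ to meet a chord of $\cV'$. Your write-up merely makes explicit a few details the paper leaves implicit (disjointness of $\cV$ and $\cV'$, the shared-endpoint case, the exact use of the nuclear plane for $q$ even).
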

\begin{proof}
By the hypothesis $\cV$ and $\cV'$ are disjoint. 

Let $q$ be odd. We claim that $\cV \cup \cV'$ is a cap. Assume by contradiction and w.l.o.g. that there is a line sharing two points with $\cV$ and one point with $\cV'$. Then the two chordal varieties have a point in common, which is not the case. We show that $\cV \cup \cV'$ is complete. Let $\ell_1, \ell_2$ be two lines of $\PG(5, q)$ such that $\ell_1 = P_1 Q_1$ and $\ell_2 = P_2 Q_2$, where $P_1, P_2$ are points of $\cV$ and $Q_1, Q_2$ are points of $\cV'$. We claim that $|\ell_1 \cap \ell_2| = 0$. Otherwise if $P = \ell_1 \cap \ell_2$, then $\langle \ell_1, \ell_2 \rangle$ is a plane, say $\pi$, where $P_1, P_2, Q_1, Q_2 \in \pi$. Hence $P_1 P_2 \cap Q_1 Q_2$ is a common point of the two chordal varieties, a contradiction. There are $(q^2+q+1)^2$ lines having exactly one point with both $\cV$ and $\cV'$ and by the above argument every point of $\PG(5, q) \setminus (\cV \cup \cV')$ lies on at most one of these lines. It follows that there are $(q^3-1)(q^2+q+1)$ points of $\PG(5, q)\setminus (\cV \cup \cV')$ covered by these lines where $|\PG(5, q) \setminus (\cV \cup \cV')| = (q^3-1)(q^2+q+1)$. This completes the proof.

Let $q$ be even. From the proof of \cite[Theorem 4.23]{HT} a line having two points in common with a Veronese variety is disjoint from its nuclear plane. By repeating the same argument as in the odd characteristic case the result follows. 
\end{proof}

\begin{remark}
{\rm Assume $q$ is odd. Under the Veronese bijective correspondence between conics of $\PG(2,q)$ and points of $\PG(5,q)$, let us consider $\cV$ as the Veronese surface arising from the lines of $\PG(2, q)$ counted twice and $\cV'$ as the Veronese surface obtained from the conics of an inscribed projective bundle, i.e. a particular collection of non--degenerate conics of $\PG(2,q)$ that mutually intersect in exactly one point, see \cite{BBEF}. By using the classification of pencils of conics of $\PG(2, q)$, \cite[Table 7.7]{H1}, \cite{LP}, it is possible to show that $\cV, \cV'$ satisfy the hypothesis of the previous theorem.} 
\end{remark}

\bigskip

\smallskip
{\footnotesize
\noindent\textit{Acknowledgments.}
This work was supported by the Italian National Group for Algebraic and Geometric Structures and their Applications (GNSAGA-- INdAM).
The second author was supported by the National Research, Development and Innovation Office -- NKFIH, grants no. PD 132463 and K 124950.}

\end{document}